\newtheorem{defn}{Definition}
\DeclarePairedDelimiter\floor{\lfloor}{\rfloor}
\newcommand{\bv}{{\boldsymbol{v}}} 
\newcommand{\bx}{{\boldsymbol{x}}} 
\newcommand{\by}{{\boldsymbol{y}}} 
\newcommand{\bz}{{\boldsymbol{z}}} 
\newcommand{\bA}{{\boldsymbol{A}}} 
\newcommand{\bC}{{\boldsymbol{C}}} 
\newcommand{\bD}{{\boldsymbol{D}}} 
\newcommand{\bX}{{\boldsymbol{X}}} 
\newcommand{\bY}{{\boldsymbol{Y}}} 
\newcommand{\xj}{\boldsymbol{x}_i}
\newcommand{\yrj}{\boldsymbol{y}_i^{+r}}
\newcommand{\Aj}{\boldsymbol{A}_i}
\newcommand{\Arrj}{\boldsymbol{A}_i^{+rr}}
\newcommand{\xrrj}{\boldsymbol{x}_i^{+rr}}
\newcommand{\xrj}{\boldsymbol{x}_i^{+r}}
\newcommand{\Uj}{\boldsymbol{U}_i}
\newcommand{\Urj}{\boldsymbol{U}_i^{+r}}
\newcommand{\Prrj}{\boldsymbol{P}_i^{+rr}}
\newcommand{\qrrj}{q_i^{+rr}}
\newcommand{\qrj}{q_i^{+r}}
\newcommand{\qj}{q_i}
\newcommand{\xonej}{\boldsymbol{x}_i^{+1}}
\newcommand{\qonej}{q_i^{+1}}
\newcommand{\Wj}{\boldsymbol{W}_i}
\newcommand{\Donej}{{\boldsymbol{D}_i^{(l)}}^{+1}}
\newcommand{\Dj}{\boldsymbol{D}_i^{(l)}}
\newcommand{\bj}{\boldsymbol{b}_{(l)}^{k,i-1}}
\newcommand{\Ponej}{\boldsymbol{P}_i^{+1}}
\newcommand{\Dvj}{\boldsymbol{D}_i^{(v)}}
\newcommand{\Dhj}{\boldsymbol{D}_i^{(h)}}
\newcommand{\bvj}{\boldsymbol{b}_{(v)}^{k,i-1}}
\newcommand{\bhj}{\boldsymbol{b}_{(h)}^{k,i-1}}
\newcommand{\Lip}{{\rm Lip}}
\newcommand{\TV}{{\rm TV}}
\newcommand{\matT}{^{\rm T}}
\newcommand{\gto}{\rightarrow}
\newcommand{\Brac}[1]{\left(#1\right)}
\newcommand{\Rectbrac}[1]{\left[#1\right]}
\newcommand{\norm}[1]{\left\|#1\right\|}
\newcommand{\normo}[1]{\|#1\|}
\newcommand{\norme}[1]{\left\|#1\right\|}
\newcommand{\mE}{\mathbb{E}}
\newcommand{\mR}{\mathbb{R}}
\newcommand{\mcJ}{\mathcal{J}}
\newcommand{\mcL}{\mathcal{L}}
\newcommand{\mdd}{\mathrm{d}}
\title{
Local MALA-within-Gibbs for Bayesian image deblurring with total variation prior\thanks{Submitted to the editors on September 18, 2024.}
}
\author{Rafael Flock\thanks{Department of Applied Mathematics and Computer Science, Technical University of Denmark (\email{raff@dtu.dk}, \email{yido@dtu.dk}).}
\and Shuigen Liu\thanks{Department of Mathematics, National University of Singapore (\email{shuigen@u.nus.edu}, \email{mattxin@nus.edu.sg}).}
\and Yiqiu Dong\footnotemark[2]
\and Xin T. Tong\footnotemark[3]}
\begin{document}
\maketitle
\begin{abstract}
	We consider Bayesian inference for image deblurring with total variation (TV) prior.
	Since the posterior is analytically intractable, we resort to Markov chain Monte Carlo (MCMC) methods.
	However, since most MCMC methods significantly deteriorate in high dimensions, they are not suitable to handle high resolution imaging problems.
	In this paper, we show how low-dimensional sampling can still be facilitated by exploiting the sparse conditional structure of the posterior.
	To this end, we make use of the local structures of the blurring operator and the TV prior by partitioning the image into rectangular blocks and employing a blocked Gibbs sampler with proposals stemming from the Metropolis-adjusted Langevin Algorithm (MALA).
	We prove that this MALA-within-Gibbs (MLwG) sampling algorithm has dimension-independent block acceptance rates and dimension-independent convergence rate.
	In order to apply the MALA proposals, we approximate the TV by a smoothed version and show that the introduced approximation error is evenly distributed and dimension-independent.
	Since the posterior is a Gibbs density, we can use the Hammersley-Clifford Theorem to identify the posterior conditionals which only depend locally on the neighboring blocks.
	We outline computational strategies to evaluate the conditionals, which are the target densities in the Gibbs updates, locally and in parallel.
	In two numerical experiments, we validate the dimension-independent properties of the MLwG algorithm and demonstrate its superior performance over MALA.
\end{abstract} 

\begin{keywords}
	Bayesian inference, image deblurring, TV prior, MALA-within-Gibbs
\end{keywords}

\begin{MSCcodes}
	62F15, 68U10, 60J22
\end{MSCcodes}

\section{Introduction}
Image acquisition systems usually capture only a blurred version of the ``true image''.
In many applications, such as remote sensing, medical imaging, astronomy, and digital photography, the aim is to reconstruct the “true image” from the acquired image \cite{berteroIntroductionInverseProblems2021a, chan2005image}.
Since the blurring can oftentimes be modeled by a linear convolution operator, image deblurring is a classical linear inverse problem.
In this paper, we follow the typical assumptions that the blurred image is obtained by a linear operator and that it is corrupted by additive Gaussian noise.
Computing a solution is usually not straightforward due to the ill-posedness of the problem, which is due to the ill-conditioning of the forward operator and the noise.
To resolve the ill-posedndess, we often require regularization, and a commonly used regularization technique in image reconstruction is the edge-preserving total variation regularization (TV) introduced in \cite{rudinNonlinearTotalVariation1992}. 

In this paper, we are not only interested in computing a reconstruction of the “true image”, but also in quantifying the uncertainty of the reconstruction.
To this end, we formulate the image deblurring problem with TV regularization as a Bayesian inverse problem.
Bayesian inverse problems can be characterized by the posterior density, which is the product of a likelihood function and a prior density \cite{gelmanBayesianDataAnalysis1995}.
Then, in accordance with the deterministic inverse problem described in the first paragraph, the likelihood function is linear-Gaussian and the prior density is a Gibbs density with a potential function given by TV.
This so-called TV prior was introduced for Bayesian inference in electrical impedance tomography in \cite{somersaloImpedanceImagingMarkov1997, kolehmainenBayesianApproachTotal1998} and has also been used in other Bayesian image reconstruction problems, e.g., image deblurring \cite{babacanVariationalBayesianBlind2009} and geologic structure identification \cite{leeBayesianInversionTotal2013a}. In \cite{lassasCanOneUse2004a}, it is shown that the TV prior is not edge-preserving when refining the discretization of the unknown image while keeping the discretization of the observed image fixed. 
We note that our work is not affected by this result, since we assume that the unknown and the observed image have the same discretization.

The posterior obtained from the linear-Gaussian likelihood and the TV prior is log-concave and can be expressed as a Gibbs density of the form
\begin{equation}\label{eq:pos_intro}
	\pi(\bx) \propto \exp(-l(\bx)-\varphi_0(\bx)),
\end{equation}
where the potential $l$ is continuously differentiable and gradient-Lipschitz, and the potential $\varphi_0$ is non-smooth. 
Target densities of this structure are common in Bayesian inverse problems with sparsity promoting priors \cite{agrawal2022variational}, which are often used in image reconstruction problems and sparse Bayesian regression, e.g., in the famous Bayesian LASSO \cite{parkBayesianLasso2008d}.

One way to perform uncertainty quantification with respect to a posterior is by sampling from it.
The samples can then be used to, e.g., approximate expectations via Monte Carlo estimates or to compute posterior statistics such as mean, standard deviation, or credibility intervals (CI).
Since we cannot sample directly from our posterior density in closed form, we use Markov chain Monte Carlo (MCMC) methods.
However, the high dimensionality of images prohibits the direct application of most MCMC methods, as their convergence slows down considerably with increasing dimension.

In this paper, we show how low-dimensional sampling can nevertheless be facilitated by exploiting the sparse conditional structure of the posterior.
The sparse conditional structure is due to the fact that the full conditional posterior of any image patch of arbitrary size only depends on the neighboring pixels within some radius.
This can be intuitively understood by considering that the convolution as well as TV operate \emph{locally} on the image.
Mathematically, the independence relationships among image patches can be proven by the Hammersley-Clifford Theorem because the considered posterior is a Gibbs density with a sparse neighborhood \cite{liMarkovRandomField2009}.
The Gibbs sampler is an attractive choice for posteriors with sparse conditional structure because the reduced dependencies only require a local evaluation of the conditionals, which is usually cheaper to compute.
Moreover, several updates may be performed in parallel.
To make use of these advantages, we partition the image into square blocks of equal size and employ a blocked Gibbs sampler with local and parallel updates of the image blocks.

Since we can not sample the conditionals in the blocked Gibbs sampler in closed form, we employ MALA-within-Gibbs (MLwG) sampling. 
In such a sampling scheme, one generates candidate samples via the well-known Metropolis-adjusted Langevin Algorithm (MALA) proposal.
MALA belongs to the class of Langevin Monte Carlo (LMC) methods, which are derived by discretizating a Langevin diffusion equation \cite{robertMonteCarloStatistical2004c}.
However, LMC methods require the gradient of the log-target density, and due to the non-smoothness of $\varphi_0$ in \cref{eq:pos_intro} we can not directly use the MALA proposal in the blocked Gibbs sampler.

Several adaptations of LMC methods to non-smooth target densities have been developed in the literature.
Arguably the most prominent adaptations are the proximal LMC algorithms, which were introduced in \cite{pereyraProximalMarkovChain2016a, durmusEfficientBayesianComputation2018a}.
Therein, the target is approximated by its Moreau-Yoshida envelope, which is continuously differentiable. 
An overview of proximal LMC methods is given in \cite{lauNonLogConcaveNonsmoothSampling2023}.
Another adapted LMC method is perturbed Langevin Monte Carlo (P-LMC), which is based on Gaussian smoothing \cite{nesterovRandomGradientFreeMinimization2017, pmlr-v108-chatterji20a}.
While both, proximal LMC and P-LMC, solve the issue of non-smoothness, their performance still deteriorates with increasing dimension.

In this work, instead of adapting the sampling method, we approximate the target density \cref{eq:pos_intro} by replacing $\varphi_0$ with a smooth approximation $\varphi_\varepsilon$:
\begin{equation}\label{eq:pos_smooth_intro}
	\pi_\varepsilon(\bx) \propto \exp(-l(\bx)-\varphi_\varepsilon(\bx)).
\end{equation}
The smoothed potential is such that $\varphi_\varepsilon \to \varphi_0$ as $\varepsilon \to 0$ and allows us to use the MALA proposal in the blocked Gibbs sampler.
For the resulting posterior approximation \cref{eq:pos_smooth_intro}, we derive a dimension-independent bound on the Wasserstein-$1$ distance between the marginal densities of $\pi$ and $\pi_\varepsilon$, which guarantees the accuracy of the smoothing.

It is shown in \cite{MR4111677} that under the assumption of sparse conditional structure, MLwG can have dimension-independent block acceptance rate (at fixed step size) and dimension-independent convergence rate. 
In this paper, we present similar guarantees with specific conditions tailored to our target density and taking the approximation error into account.
Finally, by making use of the sparse conditional structure of our posterior, which still holds under the smoothing, we show how an efficient \emph{local \& parallel} MLwG algorithm can be implemented.
We test the algorithm in two numerical experiments, where we illustrate the dimension-independent block acceptance rate and the dimension-independent convergence.
Moreover, we compare the MLwG algorithm to MALA and show that MLwG clearly outperforms MALA in terms of sample quality and computational wall-clock time with increasing dimension.

\vspace{12pt}

\paragraph{\texorpdfstring{\bf Contributions}{Contributions}}
We now summarize our main contributions.
\begin{itemize}
	\item We develop a Stein's method based approach to prove that the error between the low dimensional marginals of the target distribution $\pi$ in \cref{eq:pos_intro} and the smoothed distribution $\pi_\varepsilon$ in \cref{eq:pos_smooth_intro} is dimension-independent.  	
	\item We show how to implement an efficient local \& parallel MLwG sampling algorithm by providing the local target densities and their gradients for the block updates.
	\item We illustrate the dimension-independent block acceptance and convergence rates in two numerical experiments, where we also show that MLwG clearly outperforms MALA.
\end{itemize}

\vspace{12pt}

\paragraph{\texorpdfstring{\bf Notation}{Notation}}
In this paper, we work with discrete and square images of $n\times n$ pixels, but the results can be extended to rectangular images.
We employ a vector notation for the images by stacking them in the usual column-wise fashion.
Furthermore, we partition the images into blocks of size $m\times m$, such that $n/m$ is an integer.
We denote the number of pixels in one image by $d=n^2$, the number of pixels in one block by $q=m^2$, and the number of blocks by $b=(n/m)^2$.
We denote pixels by lower case Greek letters and blocks by lower case Roman letters.
E.g., for an image $\bx\in\mathbb{R}^d$, we write $\bx_\alpha\in\mathbb{R}$, and  $\bx_i\in\mathbb{R}^q$.
We write $[b] \coloneqq \{1,2,\dots,b\}$ where $b$ is some positive integer, and $\setminus i \coloneqq [b]\setminus i$ for $i\in[b]$.
We use upper case Greek letters to denote sets of pixel indices.
For example, $\bx_\Theta$ is the parameter block of pixels with indices in $\Theta\subseteq[d]$.
We use the notation $\nabla_i := \nabla_{\bx_i}$ for the gradient operator with respect to the pixels of block $\bx_i$.

\vspace{12pt}

\paragraph{\texorpdfstring{\bf Outline}{Outline}}

The remainder of this paper is organized as follows.
In \Cref{sec:prelim}, we formulate the Bayesian deblurring problem and recall the blocked MLwG sampler.
In \Cref{sec:pos_smooth}, we propose the smoothing of the posterior and bound the error in the marginals in the Wasserstein-$1$ distance.
In \Cref{sec:dim_ind_acc_conv}, we present the dimension-independent block acceptance rates and dimension-independent convergence rate of MLwG when applied to the smoothed posterior based on results from \cite{MR4111677}.
In \Cref{sec:loc_par_samp}, we present the local \& parallel MLwG algorithm and show how the target densities, i.e., the conditionals, can be evaluated locally and in parallel.
In \Cref{sec:num_ex}, we validate the dimension-independent properties of the local \& parallel MLwG algorithm in two numerical examples and perform a comparison to MALA. 
We conclude this paper in \Cref{sec:conc}.

\section{Preliminaries}\label{sec:prelim}
\subsection{Problem setting}
We fist consider the classic image deconvolution problem with TV regularization, e.g., \cite{Paragios_Chen_Faugeras_2006} and assume that a blurred and noisy image $\by\in\mathbb{R}^d$ is obtained by
\begin{equation}\label{eq:obs_model}
	\by = \bA \bx_\mathrm{true} + \boldsymbol{\epsilon}.
\end{equation}
Here, $\bx_\mathrm{true}\in\mathbb{R}^d$ is the “true” image, $\boldsymbol{\epsilon}\sim\mathcal{N}(\boldsymbol{0}, \tfrac{1}{\lambda} \boldsymbol{I}_d)$, and $\bA\in\mathbb{R}^{d\times d}$ is the convolution operator.
In particular, one can construct $\bA$ via the discrete point spread function (PSF), i.e., the convolution kernel.
We assume that the discrete PSF has radius $r>0$, such that $\bA\bx$ convolves each pixel with the surrounding $(2r+1)^2$ pixels.

\Cref{eq:obs_model} constitutes an inverse problem where the goal is to recover a solution that is “close to $\bx_\mathrm{true}$” from the data $\by$.
Computing a solution is typically not straightforward due to the ill-posedness of the problem.
That is, $\bA$ may be badly conditioned and thus highly sensitive to the noise.
For this reason, we employ the edge-preserving TV regularization introduced in \cite{rudinNonlinearTotalVariation1992}, which is a commonly used regularization technique in image reconstruction.
For discretized images, it reads 
\begin{equation}\label{eq:TV}
	\TV(\bx) = \sum_{\alpha=1}^d \sqrt{ (\bD_\alpha^{(v)} \bx)^2 + (\bD_\alpha^{(h)} \bx)^2 },
\end{equation}
where 
$\bD^{(v)}\in\mathbb{R}^{d\times d}$ and $\bD^{(h)}\in\mathbb{R}^{d\times d}$ 
are finite difference matrices for the computation of the horizontal and vertical differences between the pixels.
The subscript $\alpha$ denotes the $\alpha$-th row of $\bD^{(v)}$ and $\bD^{(h)}$, such that
$\bD_\alpha^{(v)}\bx$ and $\bD_\alpha^{(h)}\bx$ are the differences between pixel $\bx_\alpha$ and its neighboring pixels in the vertical and horizontal directions, respectively.
The finite difference matrices are defined as
\begin{equation}\label{eq:D_def}
	\begin{aligned}
		\bD^{(v)} &= \boldsymbol{I}_n \otimes \bD_n \\
		\bD^{(h)} &= \bD_n \otimes \boldsymbol{I}_n,
	\end{aligned}
	\qquad \mathrm{with} \qquad
	\bD_n = \begin{bmatrix}
		-1 &1 & & &  \\
		&-1 &1 & &  \\
		& &\ddots &\ddots &  \\ 
		& & &-1 &1 \\
		& & & &-1 \end{bmatrix}_{[n\times n]}.
\end{equation}
Here, $\otimes$ denotes the Kronecker product.
In this paper, without loss of generality, we consider zero boundary conditions for the finite difference matrices. 
Other boundary conditions can be used by slight modification of the algorithm and the analysis.
In practice, we compute the convolution and TV in a matrix-free way due to the high dimensionality of the problems.
However, the matrix expressions are more convenient for the derivation of our theoretical results.

We now formulate the Bayesian inverse problem by defining the posterior probability density
$\pi(\bx|\by) \propto \pi(\by|\bx) \pi_{\rm prior}(\bx)$
with the likelihood function $\bx \mapsto \pi(\by|\bx)$ and the prior density $\pi_{\rm prior}(\bx)$.
The likelihood function is determined by the data generating model \cref{eq:obs_model} and reads
\begin{equation}\label{eq:like}
	\pi(\by|\bx) \propto \exp\left( -\frac{\lambda}{2} \| \by - \bA \bx \|_2^2 \right).
\end{equation}
We note that the data $\by$ is fixed in this paper.
The prior is constructed based on the discretized TV \cref{eq:TV} and is of the Gibbs-type density
\begin{equation} \label{eq:TV_prior}
	\pi_{\rm prior}(\bx) \propto \exp\left( - \delta \TV(\bx) \right),
\end{equation}
where $\delta>0$ is controlling the strength of the prior.
In this paper, we consider $\delta$ to be given.
The prior \cref{eq:TV_prior} is called the TV prior and was introduced in \cite{somersaloImpedanceImagingMarkov1997, kolehmainenBayesianApproachTotal1998}.
Consequently, the likelihood function \cref{eq:like} and the TV prior \cref{eq:TV_prior} give rise to a log-concave composite posterior density of the form
\begin{equation}\label{eq:pos}
	\begin{split}
		\pi(\bx) := \pi(\bx|\by)  &\propto \exp\left( - l(\bx) - \varphi_0(\bx) \right), \\
		\text{with the potentials} \quad l(\bx) &:= \frac{\lambda}{2} \| \by - \bA \bx \|_2^2 \quad \text{and} \quad \varphi_0(\bx) := \delta \TV(\bx).
	\end{split}
\end{equation}
\subsection{MALA-within-Gibbs (MLwG)}\label{sec:MLwG_not_alg}
A Gibbs sampler in its original form, also called component-wise or sequential Gibbs sampler, updates each component $\bx_\alpha$ sequentially from the full conditional 
$\pi(\bx_\alpha|\bx_{\setminus\alpha})$,
and a new sample $\bx$ is obtained after all components are updated. 
It can be shown that a sample chain constructed by such an algorithm converges to the target density, see, e.g., \cite{robertMonteCarloStatistical2004c}.
Moreover, the convergence result holds also for \emph{block updates}, i.e., \emph{blocked Gibbs sampling}, which we employ in this paper.

To this end, we partition the image $\bx$ into $b$ square block images $\bx_i \in\mathbb{R}^q$, $i=1,\dots,b$.
See \Cref{fig:upd_sched} for an example with a 4-by-4 block partition.
The blocks $\bx_i$ have equal side lengths $m\in\mathbb{N}$, such that $n/m$ is an integer, and they contain $q=m^2$ pixels. 
Moreover, we require $m>2r$, where $r$ is the radius of the discrete PSF in the convolution.
\begin{figure}[htbp]
	\centering
	\includegraphics[width=0.5\textwidth]{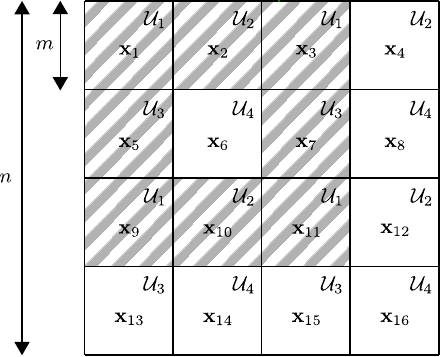}
	\caption{Example of a partition with 16 blocks. 
		When updating a block, the neighboring blocks must be fixed.
		This is exemplified here for block $6$, where the hatched blocks must be fixed.
		A possible parallel updated scheme for MLwG is indicated by the index sets $\mathcal{U}_l,\,l=1\dots4$, where for a given $l$, all blocks associated to $\mathcal{U}_l$ can be updated in parallel.}
	\label{fig:upd_sched}
\end{figure}

We now establish some notation for the blocked Gibbs sampler.
We call a complete iteration in which all blocks are updated a \textit{cycle} and denote by $\bx^{k,i}\in\mathbb{R}^d$ the state \emph{during} the $k$-th cycle \textit{after} the update of the $i$-th block. 
To illustrate this notation, consider the following presentation of block updates: 
\begin{equation}\label{eq:mlwg_states}
	\bx^0 \gto \underbrace{\bx^{1,1} \gto \bx^{1,2} \gto \cdots \gto \bx^{1,b}}_{\text{$1$st cycle}} =: \bx^1 \gto \underbrace{\bx^{2,1} \gto \bx^{2,2} \gto \cdots \gto \bx^{2,b}}_{\text{$2$nd cycle}} =: \bx^2 \gto \cdots     
\end{equation}
Notice also that we introduce $\bx^k$ to denote the state \emph{after} the $k$-th cycle.
Other block updating rules like randomized sequences could also be employed, but are not discussed here.

Now consider the state $\bx^{k,i-1}$ of the sample chain \cref{eq:mlwg_states}.
To move to state $\bx^{k,i}$, we fix $\bx^{k,i}_{\setminus i}=\bx^{k,i-1}_{\setminus i}$ and sample the remaining block
$\bx_i^{k,i} \sim \pi(\bx_i^{k,i}|\bx^{k,i-1}_{\setminus i})$.
If it is not possible to sample $\bx_i^{k,i} \sim \pi(\bx_i^{k,i}|\bx^{k,i-1}_{\setminus i})$ in closed form, one can use a single Metropolis-Hastings step as follows.
A candidate $\bz^{k,i}_i$ is simulated from a proposal density $q(\cdot|\bx^{k,i-1}_i)$ and accepted with probability
\begin{equation}\label{eq:ac_rate}
	\alpha^{k,i} = 1 \wedge \frac{ \pi(\bz_i^{k,i}|\bx^{k,i-1}_{\setminus i}) q(\bx^{k,i-1}_i|\bz^{k,i}_i) }
	{ \pi(\bx_i^{k,i-1}|\bx^{k,i-1}_{\setminus i}) q(\bz^{k,i}_i|\bx^{k,i-1}_i) }.
\end{equation}
Note that the proposal density $q(\cdot|\bx^{k,i-1}_i)$ depends on $\bx^{k,i-1}_{\setminus i}$, but here we omit the dependency for clearness. 
In MLwG, the MALA proposal density is used.
For the target density $\pi(\bx_i^{k,i}|\bx^{k,i-1}_{\setminus i})$, it reads
\begin{equation}\label{eq:MALA_prop_dens}
	q(\bz_i^{k,i}|\bx_i^{k,i}) = \exp \Brac{ - \frac{1}{4\tau} \| \bz_i^{k,i}  - \bx_i^{k,i} - \tau \nabla_i \log \pi(\bx_i^{k,i}|\bx^{k,i-1}_{\setminus i}) \|_2^2 },
\end{equation}
and a candidate is generated as
\begin{equation}\label{eq:MALA_proposal}
	\bz_i^{k,i} = \bx_i^{k,i} + \tau \nabla_i \log \pi(\bx_i^{k,i}|\bx^{k,i-1}_{\setminus i}) + \sqrt{2\tau} \boldsymbol{\xi}_i^{k,i}, \quad \boldsymbol{\xi}_i^{k,i} \sim\mathcal{N}(\boldsymbol{0},\boldsymbol{I}_q).
\end{equation}

Here, $\tau>0$ is the step size or scale parameter. 
The choice of the step size is crucial for the performance of MALA, as too large step sizes lead to many rejected samples, and too small step sizes lead to samples which are very close to each other.
In both cases, the correlation between the samples becomes large which is not preferable.
Several adaptive strategies for the tuning of the step size exist, and in our numerical experiments in \Cref{sec:num_ex}, we use the method from \cite{marshallAdaptiveApproachLangevin2012b} which adapts the step size during burn-in.
In the MLwG sampler, the adaptive strategies can be used to tune the step size for each block individually and automatically such that the algorithm requires little parameter tuning in general. 

We summarize the blocked MLwG sampler in \cref{alg:MLwG}.
\begin{algorithm}[htbp]
	\caption{MALA-within-Gibbs (MLwG) sampler}
	\label{alg:MLwG}
	\begin{algorithmic}[1]
		\REQUIRE Number of samples $N$, step size $\tau$, initial state $\bx^{0}$.
		\FOR{$k=1\dots N$}
		\STATE Set $\bx^{k,0}\gets\bx^{k-1}$.
		\FOR{$i=1\dots b$} \label{algline:it_k}
		\STATE Set $\bx^{k,i}\gets\bx^{k,i-1}$.
		\STATE Propose block candidate $\bz^{k,i}_i$ according to \cref{eq:MALA_proposal}.
		\STATE Compute acceptance probability $\alpha^{k,i}$ based on \cref{eq:ac_rate}.
		\STATE Draw $u\sim\mathrm{Uniform}(0,1)$.
		\IF{$\alpha^{k,i} > u$} 
		\STATE Set $\bx^{k,i}_i \gets \bz_i^{k,i}$.
		\ENDIF
		\ENDFOR
		\ENDFOR
	\end{algorithmic}
\end{algorithm}

\section{Posterior smoothing with dimension-independent error}\label{sec:pos_smooth}
Since MLwG requires the gradient of the log posterior density in \cref{eq:MALA_prop_dens,eq:MALA_proposal}, we propose to approximate the non-smooth posterior density $\pi$ in \cref{eq:pos} by a smooth density $\pi_\varepsilon$.
Moreover, we show that the introduced error between $\pi$ and $\pi_\varepsilon$ is distributed uniformly, leading to a local dimension-independent error on the marginal distribution of any block $\bx_i$. 

The non-smoothness of the posterior \cref{eq:pos} originates from the potential $\varphi_0(\bx)= \TV(\bx)$ in \cref{eq:TV}.
Hence, we replace $\varphi_0$ with a smoothed potential $\varphi_\varepsilon$ for some small $\varepsilon>0$, such that $\varphi_\varepsilon \to \varphi_0$ as $\varepsilon \to 0$. 
Various smoothing methods exist, but it is crucial to ensure that the introduced error remains small, particularly when $\bx$ resides in a high-dimensional space. In this paper, we consider the approximation 
\begin{equation}\label{eq:approxTV}
	\varphi_\varepsilon(\bx) := \delta \sum_{\alpha=1}^d \sqrt{ (\bD_\alpha^{(v)} \bx)^2 + (\bD_\alpha^{(h)} \bx)^2 + \varepsilon }, 
\end{equation}
for some small $\varepsilon>0$, see, e.g., \cite{vogelComputationalMethodsInverse2002}.
Thus, our smoothed posterior density reads
\begin{equation}\label{eq:smooth_pos_expl}
	\pi_\varepsilon(\bx) \propto \exp\left( - \frac{\lambda}{2} \| \by - \bA \bx \|_2^2 - \delta \sum_{\alpha=1}^d \sqrt{ (\bD_\alpha^{(v)} \bx)^2 + (\bD_\alpha^{(h)} \bx)^2 + \varepsilon } \right).
\end{equation}

As we modify $\varphi_0$, a function in $\mathbb{R}^d$, the distance between $\pi$ and $\pi_\varepsilon$ generally depends on the dimension $d$. 
For instance, one can show $\text{KL}(\pi|\pi_\varepsilon) = \mathcal{O}(d\varepsilon)$.
Here, $\text{KL}(\pi|\pi_\varepsilon)$ is the Kullback-Leibler divergence of $\pi$ from $\pi_\varepsilon$, see, e.g., \cite{Pardo_2018}.
However, when examining the marginals of $\pi$ and $\pi_\varepsilon$ over small blocks $\bx_i$, we can show that the approximation error is independent of the full dimension and thus dimension-independent. 
We comment that such dimension independence is crucial for solving the image deblurring problem. 
It ensures that the smoothing error is evenly distributed across the image, rather than concentrating on certain pixels and creating unwanted artifacts in the image. 
Ensuring a uniformly distributed error is not a concept unique to image reconstruction, for other applications see \cite{fan2018ell,tong2023pca,hu2024network}.

The key to achieve the desired result lies in the sparse conditional structure. Intuitively, such local structure makes the distribution on small blocks mostly influenced by local modifications to $\varphi_0$, and modifications on remote blocks have little impact on it. To rigorously justify the above observations, we introduce the concept of $c$-diagonal block dominance, which is imposed on $\bC = \bA\matT \bA$ to quantify how locally concentrated the blurring matrix $\bA$ is.

\begin{defn}    \label{def:local}
	Consider a positive definite matrix $\bC \in\mR^{d\times d}$ and let $\bC_{ij}$ denote the $(i,j)$-th sub block of $\bC$ of size $q$. 
	$\bC$ is called \textbf{$c$-diagonal block dominant} for some $c>0$, if there exists a symmetric matrix $M \in \mR^{b\times b}$ s.t.\
	\begin{enumerate}
		\setstretch{1.25}
		\item for any $i \in [b]$, $\bC_{ii} \succeq M_{ii} I$,
		\item for any $i,j \in [b]$, $i\neq j$, $\norm{\bC_{ij}}_2 \leq M_{ij}$,
		\item for any $i \in [b]$, $\sum_{j\neq i} M_{ij} + c \leq M_{ii} $. 
	\end{enumerate}
\end{defn}
\begin{rem}
\cite{MR4111677} introduces a similar blockwise log-concavity condition, which is crucial for the dimension-independent convergence rate therein. The $c$-diagonal block dominance here can be viewed as an $\ell_1$ version of the blockwise log-concavity condition (which is an $\ell_2$ condition).
\end{rem}

Now we state the main theorem, which shows that the marginal error of $\pi$ and $\pi_\varepsilon$ is dimension-independent if $\bA\matT\bA$ is $c$-diagonal block dominant.
\begin{theorem} \label{thm:TV_ApproxErr}
	Consider the target distribution $\pi$ defined in \cref{eq:pos} and its smooth approximation $\pi_\varepsilon$ in \cref{eq:smooth_pos_expl}. Assume that $\bA\matT \bA$ is $c$-diagonal block dominant as in \cref{def:local}. 
	Suppose $\frac{\lambda}{\delta} \geq \frac{64m}{c \sqrt{\varepsilon}}$. 
	Then, there exists a dimension-independent constant $C$ such that 
	\begin{equation}
		\max_i W_1(\pi_i,\pi_{\varepsilon,i}) \leq C \varepsilon.
	\end{equation}
	Here, $\pi_i$ and $\pi_{\varepsilon,i}$ denote the marginal distributions of $\pi$ and $\pi_\varepsilon$ on $\bx_i$, respectively, and $W_1$ denotes the Wasserstein-$1$ distance
	\[
	W_1(\mu,\nu) := \inf_{ \gamma \in \Pi(\mu,\nu) } \int \norme{\bx-\by}_2 \gamma(\bx,\by) \mdd \bx \mdd \by, 
	\]
	where $\Pi(\mu,\nu)$ denotes all the couplings of $\mu$ and $\nu$, i.e., if $(X,Y)\sim \gamma \in \Pi(\mu,\nu)$, then $ X \sim \mu $ and $ Y \sim \nu$.
\end{theorem}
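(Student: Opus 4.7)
The plan is to use Kantorovich--Rubinstein duality
\[
W_1(\pi_i,\pi_{\varepsilon,i}) = \sup_{\|f_i\|_{\Lip}\le 1}\bigl(\mE_{\pi}[f_i(\bx_i)] - \mE_{\pi_\varepsilon}[f_i(\bx_i)]\bigr)
\]
to reduce the theorem to controlling, uniformly over 1-Lipschitz $f_i:\mR^q\to\mR$ (viewed as a function on $\mR^d$ depending only on block $i$), the difference of expectations on the right-hand side. Since $\pi_\varepsilon$ is smooth and log-concave while $\pi$ differs from it only through the controlled potential perturbation $\varphi_\varepsilon-\varphi_0$, I will attack each such expectation difference via Stein's method, using as generator the overdamped Langevin operator $\mcL_{\pi_\varepsilon} u = -\nabla(l+\varphi_\varepsilon)\cdot\nabla u + \Delta u$ associated to $\pi_\varepsilon$.

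Concretely, let $u$ solve the Stein equation $\mcL_{\pi_\varepsilon} u = f_i - \mE_{\pi_\varepsilon}[f_i]$, which admits the semigroup representation $u(\bx) = -\int_0^\infty \bigl(P_t f_i(\bx) - \mE_{\pi_\varepsilon}[f_i]\bigr)\,\mdd t$, with $P_t$ the Langevin semigroup of $\pi_\varepsilon$. Taking expectation under $\pi$, integrating by parts (after a standard mollification that absorbs the non-differentiability of $\varphi_0$ on the Lebesgue-null set where some $(\bD_\alpha^{(v)}\bx)^2+(\bD_\alpha^{(h)}\bx)^2$ vanishes), and using $\mE_{\pi_\varepsilon}[\mcL_{\pi_\varepsilon}u]=0$, produces the Stein identity
\[
\mE_\pi[f_i(\bx_i)] - \mE_{\pi_\varepsilon}[f_i(\bx_i)] = \mE_\pi\bigl[(\nabla\varphi_0 - \nabla\varphi_\varepsilon)\cdot\nabla u\bigr].
\]
From here, the task is purely the quantitative estimation of the right-hand side.

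Two ingredients are needed. \emph{(i)} A localized estimate for $\nabla\varphi_0 - \nabla\varphi_\varepsilon$: each pixel term in \cref{eq:approxTV} contributes a difference controlled by the algebraic identity $1/g_0 - 1/g_\varepsilon = \varepsilon/(g_0 g_\varepsilon(g_0+g_\varepsilon))$, with $g_0=\sqrt{(\bD_\alpha^{(v)}\bx)^2+(\bD_\alpha^{(h)}\bx)^2}$ and $g_\varepsilon=\sqrt{g_0^2+\varepsilon}$, yielding a difference that is spatially supported on the finite-difference stencil near $\alpha$ and small after averaging against the log-concave $\pi$. \emph{(ii)} A dimension-independent block-decay estimate $\|\nabla_j u(\bx)\|_2 \le \Phi(\mathrm{dist}(i,j))$, with $\Phi$ decaying geometrically in the block distance between $j$ and the target block $i$ and $\sum_{j\in[b]}\Phi(\mathrm{dist}(i,j))$ bounded uniformly in $d$. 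Such decay follows from the semigroup representation of $u$, combining contraction of $P_t$ under log-concavity with the $c$-diagonal block dominance of $\nabla^2(l+\varphi_\varepsilon)$, which localizes the propagation of sensitivities between blocks along the diffusion flow.

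The main obstacle is ingredient (ii): transferring the algebraic $c$-diagonal block dominance of $\bA\matT\bA$ into a quantitative, dimension-independent block-decay bound on $\nabla u$ for the smoothed Hessian. Here the hypothesis $\lambda/\delta\ge 64m/(c\sqrt\varepsilon)$ is used decisively: $\nabla^2\varphi_\varepsilon$ has operator norm of order $\delta/\sqrt\varepsilon$ and inherits the finite-difference stencil's locality, so the hypothesis guarantees that the diagonal contribution $\lambda\bA\matT\bA$ overwhelms the perturbation by a definite slack and ensures $\nabla^2(l+\varphi_\varepsilon)$ remains $c'$-diagonal block dominant for some dimension-independent $c'>0$. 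Combining ingredients (i) and (ii) in the Stein identity, splitting the pixel sum by blocks, and summing against the absolutely summable weights $\Phi$ yields $|\mE_\pi[f_i(\bx_i)] - \mE_{\pi_\varepsilon}[f_i(\bx_i)]|\le C\varepsilon$ with $C$ independent of $d$; taking the supremum over $f_i$ in Kantorovich--Rubinstein then completes the proof.
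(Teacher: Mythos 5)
Your proposal follows essentially the same route as the paper: Kantorovich--Rubinstein duality, the Poisson/Stein equation for the Langevin generator of $\pi_\varepsilon$, the resulting identity $\mE_\pi[f_i]-\mE_{\pi_\varepsilon}[f_i]=\mE_\pi[(\nabla\varphi_0-\nabla\varphi_\varepsilon)\cdot\nabla u]$, and the splitting into a score-difference bound times a summable gradient bound on $u$. Two remarks on the execution. First, for the gradient estimate you invoke a semigroup representation with geometric block decay; the paper instead proves the summability $\sum_j\norm{\nabla_j u}_{L^\infty}\le 2\lambda^{-1}c^{-1}$ directly by differentiating the Poisson equation in $\bx_j$ and running a maximum-principle argument at the point where $\normo{\nabla_j u}_2$ is maximal, with the hypothesis $\lambda/\delta\ge 64m/(c\sqrt\varepsilon)$ used exactly as you describe, to keep $\lambda\bA\matT\bA+\nabla^2\varphi_\varepsilon$ diagonally block dominant despite the $O(\delta\varepsilon^{-1/2})$ off-diagonal blocks of the TV Hessian. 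Either route is viable, but your version leaves the decay profile $\Phi$ and its uniform summability unproved, which is where all the work lies. Second, and more importantly, your ingredient (i) asserts that each pixel term of $\nabla\varphi_0-\nabla\varphi_\varepsilon$ is ``small after averaging against the log-concave $\pi$.'' Pointwise the term is only bounded by $1-\normo{\bD_\alpha\bx}_2/\sqrt{\normo{\bD_\alpha\bx}_2^2+\varepsilon}\le 1$, which is \emph{not} small near $\bD_\alpha\bx=0$; to get the needed $O(\sqrt\varepsilon)$ in expectation one must show that $\pi$ does not concentrate near that set. The paper supplies this as a separate anti-concentration lemma, $\max_\alpha\mE_{\bx\sim\pi}\normo{\bD_\alpha\bx}_2^{-1}\le C_\pi$ with $C_\pi$ dimension-independent, proved via a change of variables and a uniform density bound on the two-dimensional marginal of $(\bD_\alpha^{(v)}\bx,\bD_\alpha^{(h)}\bx)$. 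This is a genuine missing ingredient in your sketch: without it, the averaging claim is unsubstantiated and the final $O(\varepsilon)$ bound (which also consumes the hypothesis on $\lambda/\delta$ a second time, to upgrade $\delta\sqrt\varepsilon/(\lambda c)$ to $O(\varepsilon)$) does not follow.
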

\begin{proof}
	See \Cref{app:ApproxErr}.
\end{proof}

\section{Dimension-independent block acceptance and convergence rate}\label{sec:dim_ind_acc_conv}
It is shown in \cite{MR4111677} that MLwG given in \cref{alg:MLwG} has dimension-independent block acceptance rates and a dimension-independent convergence rate for smooth densities under assumptions on sparse conditional structure and blockwise log-concavity. 
In the following, we present two results, which show that we can obtain similar dimension-independent results for $\pi_\varepsilon$ by using the concept of $c$-diagonal block dominance in \Cref{def:local}. 
Since the proofs are similar to those for Proposition 3.3 and Theorem 3.6 in \cite{MR4111677}, we skip them here.
\begin{proposition}\label{prop:ac_rate_bound}
	Suppose $\bA$ is bounded in the sense that
	\begin{equation}    \label{eq:asm_A_bound}
		\exists C_\bA>0 \quad \text{s.t.} ~\forall i,j \in [b], ~ \norm{ \bA_{ij} }_2 \leq C_\bA.
	\end{equation}
	Then the expected acceptance rate $\mE \alpha^{k,i}$ \cref{eq:ac_rate} of the MLwG proposal \cref{eq:MALA_proposal} for the smoothed distribution $\pi_\varepsilon$ is bounded below by 
	\begin{equation}\label{eq:ac_rate_lound}
		\mE \alpha^{k,i} \geq 1 - M \sqrt{\tau}. 
	\end{equation}
	Here, $M$ is a dimension-independent constant depending on the block size $q$, $C_\bA, \delta, \varepsilon$, and $\max_j \norme{ [\bA \bx^{k,i-1} - \by]_j }_2$.
\end{proposition}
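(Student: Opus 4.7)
The plan is to follow the standard acceptance-rate analysis for MALA, but carried out blockwise and localized to block $i$ so that the final bound depends only on block-level quantities. First I would substitute the MALA proposal \cref{eq:MALA_proposal} into the acceptance ratio \cref{eq:ac_rate}, so that
\[
\log \alpha^{k,i} = \log \pi_\varepsilon(\bz_i^{k,i}|\bx_{\setminus i}^{k,i-1}) - \log \pi_\varepsilon(\bx_i^{k,i-1}|\bx_{\setminus i}^{k,i-1}) + \tfrac{1}{4\tau}\bigl(\normo{\bz_i^{k,i}-\bx_i^{k,i-1}-\tau\nabla_i \log\pi_\varepsilon(\bx_i^{k,i-1}|\cdot)}_2^2 - \normo{\bx_i^{k,i-1}-\bz_i^{k,i}-\tau\nabla_i\log\pi_\varepsilon(\bz_i^{k,i}|\cdot)}_2^2\bigr).
\]
A Taylor expansion of the first difference around $\bx_i^{k,i-1}$, combined with $\bz_i^{k,i}-\bx_i^{k,i-1} = \tau\nabla_i\log\pi_\varepsilon(\bx_i^{k,i-1}|\cdot) + \sqrt{2\tau}\boldsymbol{\xi}_i^{k,i}$, produces a standard MALA remainder of the form $C\,\tau^{3/2}\,L_i\,p(\boldsymbol{\xi}_i^{k,i})$, where $L_i$ is the Lipschitz constant of $\bx_i \mapsto \nabla_i \log\pi_\varepsilon(\bx_i;\bx_{\setminus i}^{k,i-1})$ (and possibly its Hessian variation) and $p$ is a polynomial in $\boldsymbol{\xi}_i^{k,i}$ of degree at most $3$.

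The crux is then to bound $L_i$ by a dimension-independent constant. Writing $-\log\pi_\varepsilon = l + \varphi_\varepsilon$, the block Hessian splits as $\lambda\,\bA_i\matT\bA_i + \nabla_i^2\varphi_\varepsilon$, where $\bA_i$ is the block column of $\bA$ indexed by block $i$. For the first term, the assumption $\norme{\bA_{ji}}_2 \leq C_\bA$ together with $m>2r$ implies that only a bounded number of block rows $j$ contribute, giving an operator-norm bound of order $\lambda C_\bA^2$ (independent of $d$). For the second term, direct differentiation of \cref{eq:approxTV} shows that each second derivative of $\sqrt{\,(\bD_\alpha^{(v)}\bx)^2+(\bD_\alpha^{(h)}\bx)^2+\varepsilon}$ is bounded pointwise by $1/\sqrt{\varepsilon}$, and each pixel in block $i$ has only a bounded number of finite-difference neighbors, producing an operator-norm bound of order $\delta q/\sqrt{\varepsilon}$. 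The third-order terms in the expansion are handled analogously, with the extra data term $\lambda\bA_i\matT(\bA\bx^{k,i-1}-\by)$ contributing the factor $\max_j\norme{[\bA\bx^{k,i-1}-\by]_j}_2$ that appears in the statement.

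Finally I would conclude by taking expectation with respect to $\boldsymbol{\xi}_i^{k,i}\sim\mathcal{N}(\boldsymbol{0},\boldsymbol{I}_q)$, using the elementary estimate $1-\mE\alpha^{k,i}\leq \mE|\min(\log\alpha^{k,i},0)|$ and Gaussian moment bounds on $p(\boldsymbol{\xi}_i^{k,i})$. Since the moments depend only on $q$, this delivers the advertised bound $\mE\alpha^{k,i}\geq 1-M\sqrt{\tau}$ with $M=M(q,C_\bA,\delta,\varepsilon,\max_j\norme{[\bA\bx^{k,i-1}-\by]_j}_2)$. The expected main obstacle is precisely the bookkeeping of the local Hessian bound: one must exploit both the bandedness of $\bA$ (via $m>2r$ and $C_\bA$) and the finite-range neighborhood structure of $\bD^{(v)},\bD^{(h)}$ to prevent any constant from scaling with the full image dimension $d$. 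Once this block-Lipschitz estimate is in hand, the remainder of the argument is the verbatim per-block analog of Proposition 3.3 in \cite{MR4111677}, which is why we omit the technical details.
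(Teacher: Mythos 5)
Your proposal is correct and follows essentially the same route as the paper, which itself omits the proof and defers to Proposition 3.3 of \cite{MR4111677}: a blockwise Taylor expansion of the log acceptance ratio, a dimension-independent bound on the local block gradient and Hessian of $-\log\pi_\varepsilon$ (using the bandedness of $\bA$ via \cref{eq:asm_A_bound} and the finite-range structure of $\bD^{(v)},\bD^{(h)}$ together with the pointwise $\varepsilon^{-1/2}$ curvature bound on the smoothed TV), and Gaussian moment bounds in $\boldsymbol{\xi}_i^{k,i}\in\mR^q$. The only substantive content beyond the cited reference is exactly the local Hessian bookkeeping you identify, and your estimates for it are valid (if slightly loose), so nothing is missing.
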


\begin{rem}
In fact, the lower bound depends on the state of x through the term $\max_j \norme{ [\bA \bx^{k,i-1} - \by]_j }_2$.
However, since the pixels values are bounded in practice, this term is usually also bounded and dimension-independent.
\end{rem}

\begin{proposition} \label{prop:conv}
	Suppose $\bA$ is bounded as in \cref{eq:asm_A_bound}, $\bA\matT \bA$ is $c$-diagonally block dominant, and $\frac{\lambda}{\delta} \geq \frac{64m}{c \sqrt{\varepsilon}}$. 
	Then, there exists $\tau_0 > 0$, independent of the number of blocks $b$, such that for all $0 < \tau \leq \tau_0$, we can couple two MLwG samples $\bx^k$ and $\bz^k$ such that 
	\[
	\sum_{j=1}^b \Rectbrac{\mE \norm{\bx_j^k - \bz_j^k}_2}^2 \leq \Brac{ 1 - \lambda c \tau /4 }^{2k} \sum_{j=1}^b \Rectbrac{\mE \norm{\bx_j^0 - \bz_j^0}_2}^2. 
	\]
	Here, $\tau_0$ ensures that $\lambda c \tau / 4 \in (0,1)$. 
	In particular, one can let $\bz^0 \sim \pi_\varepsilon$, and it follows that $\bz^k \sim \pi_\varepsilon$, which in turn shows that $\bx^k$ converges to $\pi_\varepsilon$ geometrically fast. 
\end{proposition}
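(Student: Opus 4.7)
The plan is to adapt the coupling-based analysis of Theorem 3.6 in \cite{MR4111677} to the present setting, using the $c$-diagonal block dominance of \Cref{def:local} in place of the blockwise log-concavity hypothesis employed there. First I would construct a synchronous coupling of two MLwG chains $(\bx^k)$ and $(\bz^k)$: at every block update, both chains reuse the same Gaussian innovation $\boldsymbol{\xi}_i^{k,i}$ in the proposal \cref{eq:MALA_proposal} and the same uniform random variable in the Metropolis accept/reject step \cref{eq:ac_rate}. With $\bz^0 \sim \pi_\varepsilon$, the $\pi_\varepsilon$-invariance of the MLwG kernel guarantees $\bz^k \sim \pi_\varepsilon$ for all $k$, so any contraction in the coupled distance directly gives geometric convergence of $\bx^k$ toward $\pi_\varepsilon$.

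Next I would examine a single block update: conditional on both chains accepting, the coupled MALA step yields
\[
\bx_i^{k,i} - \bz_i^{k,i} = (\bx_i^{k,i-1} - \bz_i^{k,i-1}) + \tau\bigl[\nabla_i \log \pi_\varepsilon(\bx^{k,i-1}) - \nabla_i \log \pi_\varepsilon(\bz^{k,i-1})\bigr],
\]
with $\bx_j^{k,i} - \bz_j^{k,i} = \bx_j^{k,i-1} - \bz_j^{k,i-1}$ for $j \neq i$. Decomposing $\nabla_i \log \pi_\varepsilon = -\lambda [\bA\matT(\bA\bx - \by)]_i - \nabla_i \varphi_\varepsilon$, I would apply $c$-diagonal block dominance to bound the likelihood contribution: the diagonal block $[\bA\matT\bA]_{ii} \succeq M_{ii} I$ produces a contraction factor $1 - \lambda M_{ii} \tau$ in the $\bx_i - \bz_i$ direction, and the off-diagonal blocks contribute at most $\lambda\tau M_{ij} \|\bx_j^{k,i-1} - \bz_j^{k,i-1}\|_2$ each. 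The TV contribution $\nabla_i\varphi_\varepsilon$ is convex (hence monotone) and has Jacobian with operator norm $O(\delta m/\sqrt{\varepsilon})$ on block $i$; the hypothesis $\lambda/\delta \geq 64m/(c\sqrt{\varepsilon})$ is precisely what forces this Lipschitz constant to be absorbed into the $\lambda c$ slack guaranteed by \Cref{def:local}(3).

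Finally, I would fold in the Metropolis accept/reject step. By \Cref{prop:ac_rate_bound}, the marginal rejection probability is $O(\sqrt{\tau})$, and the common-uniform coupling additionally bounds the probability of disagreement between the two chains in terms of the total-variation distance between their acceptance probabilities, which is again $O(\sqrt{\tau})$. Choosing $\tau_0$ small enough ensures these error contributions are dominated by the contraction margin and that $\lambda c \tau / 4 \in (0,1)$, leaving a per-block inequality $\mE \|\bx_i^{k,i} - \bz_i^{k,i}\|_2 \leq (1 - \lambda c \tau/2)\, \mE \|\bx_i^{k,i-1} - \bz_i^{k,i-1}\|_2 + \lambda\tau \sum_{j\neq i} M_{ij}\, \mE \|\bx_j^{k,i-1} - \bz_j^{k,i-1}\|_2$. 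Collecting the $b$ updates of one cycle gives a block-Gauss-Seidel recursion on the vector $a_j^k := \mE \|\bx_j^k - \bz_j^k\|_2$; the third condition of \Cref{def:local} is exactly what forces this iteration to contract by a factor at most $1 - \lambda c \tau / 4$ in the $\ell_2(b)$ norm. Iterating over $k$ and squaring produces the stated inequality. The main obstacle is the careful bookkeeping of the accept/reject coupling: one must control the disagreement event by using the non-expansiveness of the MALA map and the bounded global geometry of the smoothed log-target so that the $O(\sqrt{\tau})$ disagreement probability contributes only a second-order error that the $\lambda c \tau/4$ margin can absorb. This is the delicate step that mirrors the analogous argument in Theorem 3.6 of \cite{MR4111677}.
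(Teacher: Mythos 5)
Your proposal matches the paper's approach exactly: the paper omits a detailed proof, stating only that the argument is "similar to Theorem 3.6 in [MR4111677]" and describing in a remark precisely the coupling you construct (two chains sharing the same $\boldsymbol{\xi}_i^{k,i}$ in \cref{eq:MALA_proposal} and the same uniform $u$ in the accept/reject step), with $c$-diagonal block dominance playing the role of the blockwise log-concavity condition there. The one step you rightly flag as delicate --- showing that the disagreement probability from the shared-uniform accept/reject scales with the current coupled distance rather than being merely an additive $O(\sqrt{\tau})$ term --- is also the step the paper leaves entirely to the cited reference.
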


\begin{rem}
The convergence result is based on the maximal coupling of two MLwG chains as in \cite{MR4111677}. In brief, the two chains are coupled to share in each step the same $\xi_i^{k,i}$ in \eqref{eq:MALA_proposal} and the random variable $u\sim\text{Uniform}(0,1)$ used to determine the acceptance of proposals (See \Cref{alg:MLwG}). 
\end{rem}

\section{\texorpdfstring{Local \& parallel MLwG algorithm}{Local and parallel MLwG algorithm}}\label{sec:loc_par_samp} 

The posterior \cref{eq:smooth_pos_expl} can be interpreted as a Gibbs density over an undirected graph.
The nodes are given by the pixels, and the edges are given by the neighborhoods of the pixels.
In particular, a pixel is connected to all its neighbors.
Pixels are neighbors to each other if their associated random variables directly interact with each other in the joint density.
For example, two random variables $Z_1\in\mathbb{R}$ and $Z_2\in\mathbb{R}$ directly interact in the density $\rho(Z)\propto\exp(-Z_1 Z_2)$ but not in $\rho(Z)\propto\exp(- (Z_1 + Z_2) )$.
A set of pixels in which all pixels are neighbors to each other is called clique.
Furthermore, a maximal clique is a clique to which no pixel can be added such that the set is still a clique \cite{liMarkovRandomField2009}.

In the posterior \cref{eq:smooth_pos_expl}, the maximal cliques can be associated to the pixels.
This is due to the convolution, where all pixels within the convolution radius have a direct relationship with each other.
A Gibbs density can be factorized over its maximal cliques, and thus we can write for the posterior \cref{eq:smooth_pos_expl}
\begin{equation}\label{eq:clique_pots}
	\pi(\bx) \propto \exp{\Big( - \sum_{\alpha\in[d]} V_\alpha( \bx ) \Big)},
\end{equation}
where the maximal clique potentials $V_\alpha$ are given by
\begin{equation*}
	V_\alpha( \bx ) = \frac\lambda2 ( \by_\alpha -  \bA_\alpha \bx )^2 + \delta \sqrt{ (\bD_\alpha^{(v)} \bx)^2 + (\bD_\alpha^{(h)} \bx)^2 + \varepsilon }.
\end{equation*}
In fact, each clique potential $V_\alpha$ only depends on the pixels belonging to the clique, but for now we write $V_\alpha(\bx)$.

\begin{lemma}\label{lem:cond_one_pixel}
	Let $\Theta_\alpha\subseteq[d]$ be the index set containing $\alpha$ and the indices of all pixels within a frame of radius $r$ around pixel $\bx_\alpha$.
	Analogously, let $\Phi_\alpha\subseteq[d]$ be the index set containing $\alpha$ and the indices of all pixels within a frame of radius $2r$ around pixel $\bx_\alpha$.
	Here, the integer $r>0$ is the radius of the discrete PSF.
	Then, we can express the full conditional of $\bx_\alpha$ as 
	\begin{equation}\label{eq:cond_pixel}
		\bx_\alpha \sim \pi(\bx_\alpha|\bx_{\setminus \alpha}) = \pi(\bx_\alpha|\bx_{\Phi_\alpha \setminus \alpha}) \propto \exp{ \Big( - \sum_{\beta\in\Theta_\alpha} V_\beta( \bx_{\Phi_\alpha} ) \Big)}.
	\end{equation}
\end{lemma}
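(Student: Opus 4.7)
The plan is to derive the full conditional directly from the clique factorization \cref{eq:clique_pots}, by invoking the locality of the clique potentials $V_\beta$ twice: once to identify which $V_\beta$ depend on $\bx_\alpha$, and once to identify which pixels those $V_\beta$ depend on.

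First, I would establish the locality of a single clique potential. The likelihood term $\tfrac{\lambda}{2}(\by_\beta - \bA_\beta \bx)^2$ involves only pixels inside the $(2r+1)\times(2r+1)$ window around $\beta$ by the PSF-radius assumption on $\bA$. The smoothed TV term only involves $\bx_\beta$ together with its immediate vertical and horizontal neighbors through $\bD^{(v)}_\beta$ and $\bD^{(h)}_\beta$, which (since $r\ge1$) is contained in the same window. Hence $V_\beta$ is a function only of $\bx_{\Theta_\beta}$.

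Second, I would compute the conditional. Since conditionals of Gibbs densities are obtained by fixing $\bx_{\setminus \alpha}$ and reading off the $\bx_\alpha$-dependence, I would write
\[
\pi(\bx_\alpha\mid\bx_{\setminus\alpha}) \propto \exp\!\Big(-\sum_{\beta\in[d]} V_\beta(\bx)\Big)
\]
and discard all $V_\beta$ that are constant in $\bx_\alpha$. By the first step, $V_\beta$ depends on $\bx_\alpha$ only if $\alpha\in\Theta_\beta$, which by symmetry of the $\ell_\infty$-distance on the pixel lattice is equivalent to $\beta\in\Theta_\alpha$. This yields
\[
\pi(\bx_\alpha\mid\bx_{\setminus\alpha}) \propto \exp\!\Big(-\sum_{\beta\in\Theta_\alpha} V_\beta(\bx)\Big).
\]
Now I would apply locality a second time: for $\beta\in\Theta_\alpha$, the pixel set $\Theta_\beta$ on which $V_\beta$ actually depends lies within radius $r$ of $\beta$, which itself lies within radius $r$ of $\alpha$; the triangle inequality then gives $\Theta_\beta\subseteq\Phi_\alpha$. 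Therefore $\sum_{\beta\in\Theta_\alpha} V_\beta$ is a function of $\bx_{\Phi_\alpha}$ alone, which simultaneously gives the claimed formula \cref{eq:cond_pixel} and the Markov identity $\pi(\bx_\alpha\mid\bx_{\setminus\alpha})=\pi(\bx_\alpha\mid\bx_{\Phi_\alpha\setminus\alpha})$.

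There is no conceptual obstacle here; the argument is the standard Markov-random-field reading of a Gibbs factorization, in the spirit of Hammersley--Clifford. The only care required is bookkeeping with the index sets $\Theta_\alpha$ and $\Phi_\alpha$ at the image boundary, where the windows get truncated by the zero boundary convention for $\bD_n$; the containment $\Theta_\beta\subseteq\Phi_\alpha$ for $\beta\in\Theta_\alpha$ continues to hold in that case because truncation only shrinks the dependence sets. I expect the proof to fit comfortably in a short paragraph.
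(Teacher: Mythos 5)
Your argument is correct and is essentially the paper's own proof: both identify the cliques depending on $\bx_\alpha$ as those within PSF radius $r$ (the TV term's dependence being subsumed since $r\geq1$), then obtain $\Phi_\alpha$ as the radius-$2r$ neighborhood by a second application of locality, all under the Hammersley--Clifford reading of the Gibbs factorization. The paper merely carries this out in explicit two-dimensional index notation $(\hat\alpha,\hat\beta)$ with the PSF weights written out, whereas you phrase the same two steps via the symmetry of the $\ell_\infty$-window and the triangle inequality.
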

\begin{proof}
	See \Cref{app:loc_cond}. 
\end{proof}

In \Cref{lem:cond_one_pixel}, $\Theta_\alpha$ is the index set of all clique potentials that depend on $\bx_\alpha$, and $\Phi_\alpha$ is the index set of all pixels on which the clique potentials $\{V_\alpha\,|\,\alpha\in\Theta_\alpha\}$ depend.
\Cref{lem:cond_one_pixel} relies on the well-known Hammersly-Clifford theorem \cite{hammersley1971markov}, and \cref{eq:cond_pixel} basically states that pixel $\bx_\alpha$ and all pixels in $[d]\setminus\Phi_\alpha$ are conditionally independent given the pixels in $\Phi_\alpha\setminus\alpha$.
Or in other words, the full conditional of pixel $\bx_\alpha$ depends only on its neighborhood given by $\Phi_\alpha\setminus\alpha$.
These reduced conditional dependence relationships are generally known as \emph{sparse conditional structure}, and we equivalently say that the posterior density \cref{eq:smooth_pos_expl} is \emph{local} \cite{MR3901708,MR4111677}.

Sparse conditional structure is often a prerequisite for an efficient and dimension-independent MLwG sampler.
Indeed, \Cref{lem:cond_one_pixel} allows us to equivalently express the target $\pi(\bx_i|\bx_{\setminus i})$ of the blocked MLwG \Cref{alg:MLwG} by a density with reduced conditional dependencies.
Concretely, we can write similarly to \cref{eq:cond_pixel}
\begin{equation}\label{eq:red_cond_blocks}
	\bx_i \sim \pi(\bx_i|\bx_{\setminus i}) = \pi(\bx_i|\bx_{\Phi_i \setminus i}) \propto \exp{ \Big( - \sum_{\beta\in\Theta_i} V_\beta( \bx_{\Phi_i} ) \Big)},
\end{equation}
where $\Theta_i$ is now the index set of all clique potentials that depend on the pixels in block $i$, and $\Phi_i$ is the index set of all pixels on which the clique potentials $\{V_\alpha\,|\,\alpha\in\Theta_i\}$ depend.

In the following sections, we outline how $\exp{ ( - \sum_{\beta\in\Theta_i} V_\beta( \bx_{\Phi_i} ) )}$ and its gradient, which is required for the MALA proposal \cref{eq:MALA_proposal}, can be evaluated efficiently.
To this end, for the $(k,i-1)$-th state of the sample chain, we define 
\begin{itemize}
\setstretch{1.5}
    \item the \emph{local block likelihood}: $\exp\left( -l^{k,i-1}(\bx_i^{k,i}) \right)$ and
    \item the \emph{local block prior}: $\exp\left( -\varphi_\varepsilon^{k,i-1}(\bx_i^{k,i}) \right)$
\end{itemize} 
with the potentials $l^{k,i-1}:\mathbb{R}^q\to\mathbb{R}_{\geq0}$ and $\varphi_\varepsilon^{k,i-1}:\mathbb{R}^q\to\mathbb{R}_{\geq0}$, such that
\begin{equation}\label{eq:loc_block_dens}
	\bx_i^{k,i} \sim \exp{ \Big( - \sum_{\beta\in\Theta_i} V_\beta( \bx_i^{k,i}, \bx_{\Phi_i \setminus i}^{k,i} ) \Big)} = 
	\exp\left( -l^{k,i-1} (\bx_i^{k,i})  -\varphi_\varepsilon^{k,i-1}(\bx_i^{k,i}) \right).
\end{equation}
In the potentials $l^{k,i-1}$ and $\varphi_\varepsilon^{k,i-1}$, the pixels $\bx_{\Phi_i \setminus i}$ are fixed at the $(k,i-1)$-th state of the sample chain, i.e., $\bx_{\Phi_i \setminus i}^{k,i}=\bx_{\Phi_i \setminus i}^{k,i-1}$, such that $l^{k,i-1}$ and $\varphi_\varepsilon^{k,i-1}$ are functions in $\bx_i^{k,i}$.
In the following sections, we omit the superscripts $k,i$ of $\bx_i^{k,i}$ for readability and outline separately the explicit computations of $l^{k,i-1}(\bx_i)$ and $\varphi_\varepsilon^{k,i-1}(\bx_i)$, and their gradients.
\subsection{Local block likelihood}
To evaluate the convolution in $l^{k,i-1}(\bx_i)$, we require all pixels in the set $\Phi_i$, which contains the pixels in block $i$ and all pixels within a frame of radius $2r$ around block $i$.
In the following, we denote this extended block by $\xrrj \eqqcolon \bx_{\Phi_i} \in\mathbb{R}^{\qrrj}$.
Correspondingly, we let $\xrj\in\mathbb{R}^{\qrj}$ denote the block $\xj$ extended by all pixels within a frame of radius $r$ around block $i$.
Note that the extended blocks $\xrrj$ and $\xrj$ have different sizes $\qrrj$ and $\qrj$, respectively, based on their location in the image.
For example, $\xrrj$ has size $\qrrj\in\{(m+2r)^2,(m+4r)(m+2r),(m+4r)^2\}$, depending on whether block $i$ is located in a corner, at an edge or in the interior of the image.
Moreover, we define the component selection matrices $\Urj\in\mathbb{R}^{\qrrj\times\qrj}$ and $\Uj\in\mathbb{R}^{\qrrj\times\qj}$, which allow for the following mappings among $\xrrj$, $\xrj$, and $\xj$:
\[
	{\Urj}\matT\xrrj=\xrj, \qquad
	{\Uj}\matT\xrrj=\xj.
\]
We show an illustration of the extended blocks in \Cref{fig:block_like}.
\begin{figure}[htbp]
	\centering
	\includegraphics[width=0.7\textwidth]{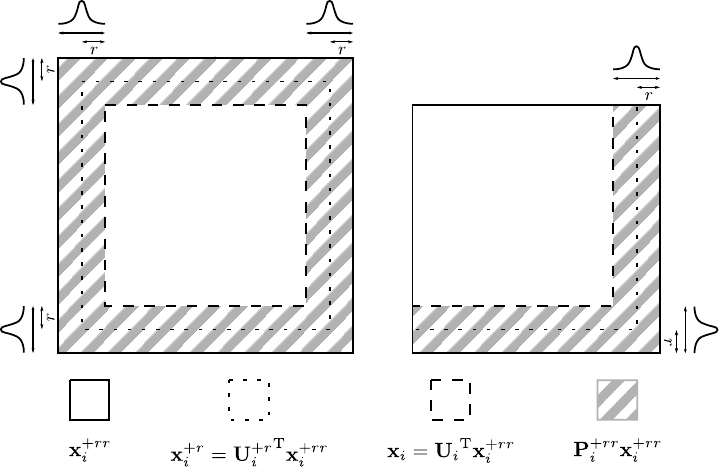}
	\caption{Examples of extended blocks for an interior block (left) and a corner block (right) in order to compute the local block likelihood.}
	\label{fig:block_like}
\end{figure}

Now we can evaluate the potential of the local block likelihood by
\begin{equation}\label{eq:block_like_0}
	l^{k,i-1}(\xrrj) = \frac{\lambda}{2} \| \yrj - {\Urj}\matT \Arrj \xrrj \|_2^2,
\end{equation}
where $\Arrj \in \mathbb{R}^{\qrrj \times \qrrj}$ is the convolution matrix with the correct dimensions for the extended block $\xrrj$.
Moreover, we let 
\begin{align}
	\Aj &= {\Urj}\matT \Arrj \Uj \label{eq:Aj} \\
	\by^{k,i-1} &= \yrj - {\Urj}\matT \Arrj \Prrj \xrrj \label{eq:byk}
\end{align}
where $\Prrj=\boldsymbol{I}-\Uj{\Uj}\matT\in\mathbb{R}^{\qrrj\times\qrrj}$ is an orthogonal projector, which projects $\xrrj$ onto the fixed pixels of the current update, see the hatched area in \Cref{fig:block_like}.
Then, we can formulate \cref{eq:block_like_0} as a function in $\xj$:
\begin{equation}\label{eq:block_like}
	l^{k,i-1}(\xj) = \frac{\lambda}{2} \| \by^{k,i-1} - \Aj \xj \|_2^2,
\end{equation}
and the gradient is easily obtained as
\begin{equation}\label{eq:block_like_grad}
	\nabla l^{k,i-1}(\bx_i) = -\lambda \Aj\matT  ( \by^{k,i-1} - \Aj \xj ).
\end{equation}

\subsection{Local block prior}
The set of pixels $\Phi_i$ on which the conditional \cref{eq:red_cond_blocks} depends is determined by the convolution. 
To compute the TV in $\varphi_\varepsilon^{k,i-1}$, we do not require all pixels in $\Phi_i$, but only the pixels of block $i$, extended by a frame of one pixel around it.
This is similar to the computation of the local block likelihood with $2r=1$, see \Cref{fig:block_like} again.
In the following, we denote this extended block by $\xonej\in\mathbb{R}^{\qonej}$.
Note that the extended block $\xonej$ can have different sizes $\qonej\in\{(m+1)^2, (m+1)(m+2), (m+2)^2 \}$, depending on whether it is located in a corner, at an edge or in the interior of the image.
Moreover, we define the component selection matrix $\Wj\in\mathbb{R}^{\qonej\times\qj}$, which allows for the following mapping between $\xj$ and $\xonej$:
$$
{\Wj}\matT\xonej=\xj.
$$
Now let 
\[
	\Dj = \Donej \Wj, \qquad
	\bj = \Donej \Ponej \xonej,
\]
where $\Donej\in\mathbb{R}^{\qonej\times\qonej}$ computes the differences in vertical ($l=v$) or horizontal ($l=h$) direction of the extended block $\xonej$.
Moreover, the orthogonal projector $\Ponej=\boldsymbol{I}-\Wj{\Wj}\matT\in\mathbb{R}^{\qonej \times \qonej}$ projects $\xonej$ onto the pixels, which are fixed during the update.
This is similar to the computation of the local block likelihood, see the hatched area in \Cref{fig:block_like} again.
Then, we can evaluate the potential of the local block prior by
\begin{equation}\label{eq:block_pr}
	\varphi_\varepsilon^{k,i-1}(\xj) = \delta \sum_\alpha \sqrt{ [\Dvj \xj + \bvj]_\alpha^2 + [\Dhj \xj + \bhj]_\alpha^2 + \varepsilon },
\end{equation}
and its gradient reads 
\begin{equation}
\label{eq:block_pr_grad}
\begin{split}
    \nabla \varphi_\varepsilon^{k,i-1}(\bx_i)
	=~& {\Dvj}\matT \Lambda(\xj)^{-1} (\Dvj \xj + \bvj) \\
	&+ {\Dhj}\matT \Lambda(\xj)^{-1} (\Dhj \xj + \bhj), \\
    \text{where} \quad \Lambda(\xj)= {\rm diag} &\Brac{ \sqrt{ [\Dvj \xj + \bvj]_\alpha^2 + [\Dhj \xj + \bhj]_\alpha^2 + \varepsilon } }.
\end{split}
\end{equation}

\begin{rem}
It is straightforward to extend the above computations to other types of difference operators (e.g., rank-deficient difference matrices, higher-order finite differences, etc.) as long as they exhibit the local structure, i.e., they act spatially local on the pixels. 
\end{rem}

\subsection{Local \& parallel MLwG algorithm}

Due to the local conditional dependencies in the posterior, we may be able to update several blocks in parallel during the loop in line \ref{algline:it_k} in \Cref{alg:MLwG}.
However, when updating block $i$, we have to fix all pixels within a frame of radius $2r$ around this block, and therefore, we can not update any of the neighboring blocks at the same time.
For an example, see \Cref{fig:upd_sched}, where the hatched area corresponds to the fixed blocks when updating block $\bx_6$.

We define a parallel updating scheme via the index sets $\mathcal{U}_l\subset[b],\,l=1,2,\dots$ with $\mathcal{U}_l \cap \mathcal{U}_p = \varnothing, ~ \forall l\neq p$, such that the blocks $\{i\,|\, i\in\mathcal{U}_l\}$ can be updated in parallel.
The choices $\mathcal{U}_l$ are not unique, but it holds $l\geq4$ for 2D images with square block partition.
In this paper, we use the minimal number of required updating sets $\mathcal{U}_l$, i.e., $l\in\{1,2,3,4\}$.
With this choice, the number of blocks which can be updated in parallel is
\begin{equation}\label{eq:num_par_upd}
	|\mathcal{U}_l| \geq \floor*{\frac{n}{2m}}^2,
\end{equation}
where we recall that $n$ is the side length of the image and $m$ the side length of the blocks.
An example of a parallel updating schedule is illustrated in \Cref{fig:upd_sched}.

In \Cref{alg:loc_TV}, we state a \emph{local \& parallel} version of the MLwG \Cref{alg:MLwG}.
That is, we consider \emph{parallel} updates in line \ref{algline:par_upd_loop}, and we evaluate the full conditionals in the block updates \emph{locally}, i.e., we use the expressions for the local block likelihood \cref{eq:block_like} and the local block prior \cref{eq:block_pr} in line \ref{algline:loc_upda}.
\begin{algorithm}[htbp]
	\caption{Local \& Parallel MLwG}
	\label{alg:loc_TV}
	\begin{algorithmic}[1]
		\REQUIRE Number of samples $N$, step size $\tau$, initial state $\bx^{0}$, partition into blocks $i=1,\dots,b$, sets of indices for parallel updating $\mathcal{U}_l$.
		\FOR{$k=1$ to $N$}
		\STATE Set $\bx^{k,0}\gets\bx^{k-1}$.
		\FOR{$l=1$ to $4$}\label{algline:seq_upd_loop}
		\FORALL[This loop can be done in parallel.]{$i\in\mathcal{U}_l$}\label{algline:par_upd_loop}
		\STATE Set $\bx^{k,i}\gets\bx^{k,i-1}$.
		\STATE Get $\xrrj$ and $\xonej$ from the current state $\bx^{k,i}$ (see \cref{fig:block_like} for example).%
		\STATE Propose a block candidate $\bz_i^{k,i}$ via \cref{eq:MALA_proposal} for the local block density \cref{eq:loc_block_dens}, with potentials given by \cref{eq:block_like,,eq:block_pr}. The gradients of the potentials are given by \cref{eq:block_like_grad,,eq:block_pr_grad}.\label{algline:loc_upda}
		\STATE Compute the acceptance probability $\alpha^{k,i}$ \cref{eq:ac_rate}.
		\STATE Simulate $u\sim\mathrm{Uniform}(0,1)$. 
		\IF{$\alpha^{k,i} > u$}
		\STATE Set $\bx_i^{k,i} \gets \bz_i^{k,i}$
		\ENDIF
		\ENDFOR
		\ENDFOR
		\ENDFOR
	\end{algorithmic}
\end{algorithm}

\section{Numerical examples}\label{sec:num_ex}
Both images in the following experiments are in grayscale and are normalized such that the pixel values are between 0 and 1. 
Note however, that we do not enforce the box constraint $[0, 1]$ on the pixel values in our experiments. 
In each sampling experiment, we compute 5 independent sample chains with $2000$ samples each and apply thinning by saving only every $200$-th sample to reduce correlation.
We check our sample chains for convergence by means of the potential scale reduction factor (PSRF) \cite{gelmanInferenceIterativeSimulation1992}. 
In brief, PSRF compares the within-variance with the in-between variance of the chains. 
Empirically, one considers sample chains to be converged if $\mathrm{PSRF}<1.1$. 
We use the Python package arviz \cite{Kumar2019} to compute the PSRF. 
With the same package, we compute the normalized effective sample size (nESS) and credibility intervals (CI), see, e.g., \cite{murphyMachineLearningProbabilistic2012} for definitions.
Our code is publicly available in \cite{Flock_2025}.
\subsection{Cameraman}
In this example, we first check the effect of different choices of the smoothing parameter $\varepsilon$ on the posterior density and sampling performance of MLwG.
Then, we illustrate the validity of the theoretical results from \Cref{sec:dim_ind_acc_conv}, namely the dimension-independent block acceptance and convergence rate of MLwG. 
Finally, we compare MLwG to MALA and show that the local \& parallel MLwG given in \Cref{alg:loc_TV} clearly outperforms MALA in increasing dimension in terms of sample quality and wall-clock time.

\subsubsection{Problem description}
We consider a blurred and noisy image, “cameraman”, with the size $512\times 512$ and show the “true” image on the left in \Cref{fig:compl_sect}.
The different image sections defined by the black frames and block partitions in the same image are required for the experiments in \Cref{sec:dim_ind_exp,,sec:comp_MALA_cam}. 
The data is synthetically obtained via the observation model \cref{eq:obs_model}, where $\boldsymbol{A}\in\mathbb{R}^{d\times d}$ corresponds to the discretization of a Gaussian blurring kernel with radius $8$ and standard deviation $8$. 
The noise is a realization of $\boldsymbol{\epsilon}\sim\mathcal{N}(\boldsymbol{0}, 0.01^2 \boldsymbol{I})$, and the degraded “cameraman” is shown on the right in \Cref{fig:compl_sect}.
\begin{figure}[htbp]
	\centering
	\includegraphics[scale = 0.85]{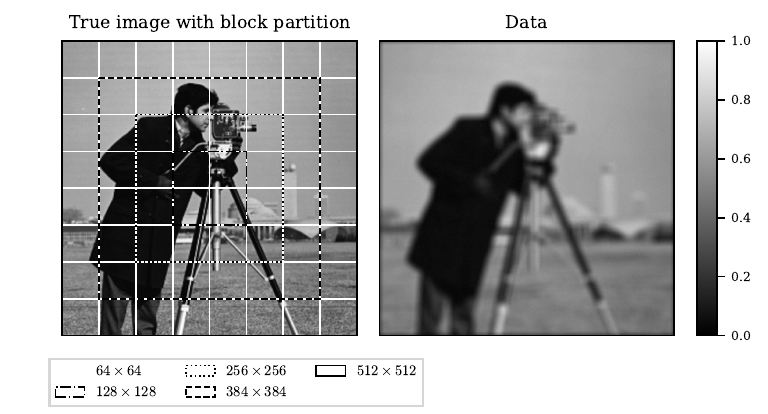}
	\caption{Left: True “cameraman” image and partition into deblurring problems of different sizes (black frames). 
		All sections are again partitioned into blocks of equal size $64\times 64$ (white frames). 
		Right: Data obtained via Gaussian blur and additive Gaussian noise.}
	\label{fig:compl_sect}
\end{figure}

We use the adaptive total variation approach in \cite{oliveiraAdaptiveTotalVariation2009} to determine the rate parameter $\delta$ in the TV prior \cref{eq:TV_prior}, and obtain $\delta=35.80$ for the $512\times 512$ image. 
We use this choice for all other problem sizes in \Cref{sec:dim_ind_exp,,sec:comp_MALA_cam} as well. 

\subsubsection{\texorpdfstring{Influence of the smoothing parameter $\boldsymbol{\varepsilon}$}{Influence of the smoothing parameter varepsilon}}
We compute MAP estimates for $\varepsilon\in\{10^{-3}, 10^{-5}, 10^{-7}\}$ with the majorization-minimization algorithm proposed in \cite{Figueiredo2006} and show the results in the left column of \Cref{fig:MAP_mean_CI_eps}.

We can see that the restoration from $\varepsilon=10^{-3}$ has smoother edges than the other two restorations which exhibit the typical cartoon-like structure of TV-regularized images.
Further, the difference between the results from $\varepsilon=10^{-5}$ and $\varepsilon=10^{-7}$ is hardly visible.

\begin{figure}
	\centering
	\includegraphics{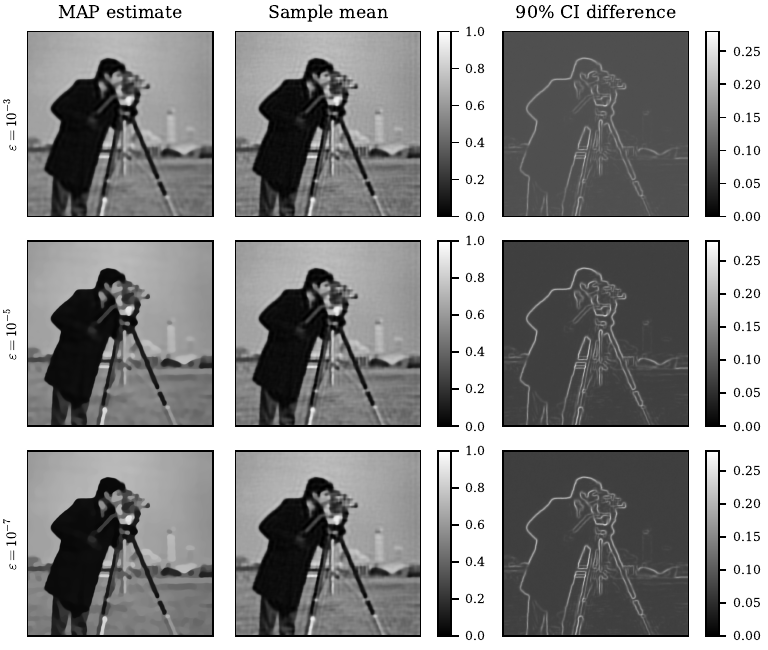}
	\caption{MAP estimates, sample means, and widths of the 90\% sample CIs for $\varepsilon\in\{10^{-3}, 10^{-5}, 10^{-7}\}$.
		The sampling results are obtained with the local \& parallel MLwG given in \Cref{alg:loc_TV}.}
	\label{fig:MAP_mean_CI_eps}
\end{figure}

We now run our local \& parallel MLwG (\Cref{alg:loc_TV}) for $\varepsilon\in\{10^{-3}, 10^{-5}, 10^{-7}\}$ with a diminishing step size adaptation during burn-in \cite{marshallAdaptiveApproachLangevin2012b} targeting an acceptance rate of $0.547$ in each block.\
\footnote{We note that the diminishing step size adaptation proposed in \cite{marshallAdaptiveApproachLangevin2012b} is performed during the whole sampling process. 
In our experiments, we stop the adaptation after a specified number of burn-in samples.}
We chose this target acceptance rate due to the results in \cite{robertsOptimalScalingVarious2001a}.
We show the sample means and widths of the 90\% sample CIs in \Cref{fig:MAP_mean_CI_eps}.
Here, the sample means of $\varepsilon=10^{-5}$ and $\varepsilon=10^{-7}$ are visually more favorable than their corresponding MAP estimates. 
In contrast, the result of $\varepsilon=10^{-3}$ contains visible artifacts.

Moreover, the 90\% sample CI difference is in general wider for $\varepsilon=10^{-3}$ than for $\varepsilon=10^{-5}$ and $\varepsilon=10^{-7}$.
However, on the edges, the width of the 90\% sample CIs are rather similar.
We also list some results about the sample chains in \Cref{tab:MALA_eps}.
Here, we note that $\varepsilon=10^{-3}$ allows for a significantly larger mean step size in comparison to $\varepsilon=10^{-5}$ and $\varepsilon=10^{-7}$.
This results in less correlated samples, which is reflected in a larger nESS.

We conclude that relatively large values of $\varepsilon$ make the posterior density smoother, allowing for larger step sizes and thus making the sampling more efficient in terms of nESS.
However, at the same time, the results can be visually significantly different compared to choices of small $\varepsilon$, which yield sharper edges in the MAP estimate and the mean. 
Based on these observations, and since the results for $\varepsilon=10^{-5}$ and $\varepsilon=10^{-7}$ are very close, we fix $\varepsilon=10^{-5}$ in the remaining experiments.
\begin{table}[htbp]
	\footnotesize
	\caption{Sampling results of MLwG for $\varepsilon\in\{10^{-3}, 10^{-5}, 10^{-7}\}$. 
	   The shown min nESS is the pixel-wise minimum of the mean nESS which is taken over the $5$ chains.
		The shown step size $\tau$ and acceptance rate $\alpha$ are the means taken over all blocks and the $5$ chains.
		The maximum and median PSRF are with respect to the pixels.}
	\label{tab:MALA_eps}
	\begin{center}
		\begin{tabular}{ccccccc} 
			\toprule
			$\boldsymbol{\varepsilon}$ &\textbf{min nESS [\%]} &$\boldsymbol{\tau}$ \textbf{[10\textsuperscript{-6}]} &$\boldsymbol{\alpha}$ \textbf{[\%]} &\textbf{max PSRF} &\textbf{median PSRF}\\ 
			\midrule
			$10^{-3}$ &13.3 &25.8 &54.7 &1.01 &1.00 \\
			$10^{-5}$ &3.2 &7.5 &54.4 &1.03 &1.00 \\
			$10^{-7}$ &2.1 &5.6 &54.3 &1.04 &1.00 \\ 
			\bottomrule
		\end{tabular}
	\end{center}
\end{table}
\subsubsection{Dimension-independent block acceptance rate}\label{sec:dim_ind_exp}
To test the\newline dimension-independent block acceptance rate in \cref{prop:ac_rate_bound}, 
we partition the original $512\times 512$ image into 4 sections of sizes $128\times 128$, $256\times 256$, $384\times 384$, and $512\times 512$.
Furthermore, each section is partitioned into blocks of equal size $64 \times 64$. 
Thus, the number of blocks in the sections of sizes $128\times 128$, $256\times 256$, $384\times 384$, and $512\times 512$ are $4$, $16$, $36$, and $64$, respectively. 
The 4 deblurring problems are shown on the left in \cref{fig:compl_sect}.

We run the local \& parallel MLwG given in \Cref{alg:loc_TV} with a step size of $\tau=7.44\times 10^{-6}$ on the 4 deblurring problems with different sizes.
The step size is taken from a pilot run on the $512\times 512$ problem by targeting an acceptance rate of $0.547$ in each block and then taking the average of all block step sizes. 
For all problem sizes, we use a burn-in period of $31,250$ samples.
We plot the acceptance rate for each block in \cref{fig:loc_acc} and see that the block acceptance rates are indeed dimension-independent. 

\begin{figure}[htbp]
	\centering
	\includegraphics[scale=0.85]{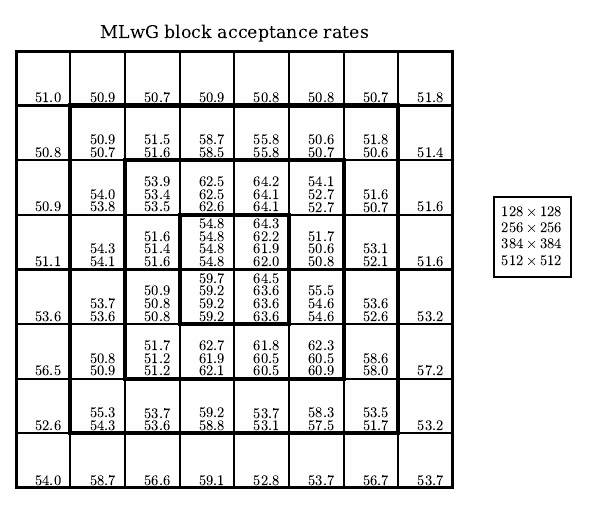}
	\caption{Block acceptance rates of MLwG for the different problem sizes in \%. 
		The acceptance rates are listed according to the problem sizes in the order shown on the right.} 
\label{fig:loc_acc}
\end{figure}
\subsubsection{Comparison to MALA}\label{sec:comp_MALA_cam}
In this subsection, we compare the performance of our local \& parallel MLwG (\Cref{alg:loc_TV}) with MALA. 
For MALA, we use again the diminishing step size adaptation from \cite{marshallAdaptiveApproachLangevin2012b} during burn-in, where we target an acceptance rate of $0.547$.
The numbers of burn-in samples are listed in \Cref{tab:MLwG_MALA} and are chosen such that they increase linearly with the problem size. 
For MLwG, we use the same setting as in the previous tests.

We compare the sampling performance of MALA and MLwG in \cref{tab:MLwG_MALA}.
In general, MLwG yields much larger nESS than MALA because it allows for a larger step size.
In MALA, the proposal is global, and it is known that one should take $h= O(d^{-1/3})$ to ensure a non-degenerate acceptance rate \cite{MR1625691}. 
In contrast, in MLwG, the proposal is local, and the step size can be taken independent of the total dimension, see \Cref{prop:ac_rate_bound}.
Furthermore, the nESS of MLwG becomes even larger as the problem size increases. 
We attribute this to the diminishing constraining effect of the boundary condition associated with the convolution operator on the inner blocks as the dimension increases. 
In addition, we note that for the given burn-in, MALA does not converge for the problem sizes $384\times 384$ and $512\times 512$, since the corresponding $\mathrm{max}~\mathrm{PSRF}>1.1$.
\begin{table}[htbp]
\footnotesize
\caption{Comparison of the local \& parallel  MLwG and MALA. 
    The printed min nESS is the pixel-wise minimum of the mean nESS which is taken over the $5$ chains.
    For MLwG, the shown step size $\tau$ and acceptance rate $\alpha$ are the means taken over all blocks and the $5$ chains.
	The maximum PSRF is with respect to the pixels.}
\label{tab:MLwG_MALA}
\begin{center}
	\begin{tabular}{lccccc} 
		\toprule
		\bf{Problem size} & &\bf 128\texttimes128 &\bf 256\texttimes256 &\bf 384\texttimes384 &\bf 512\texttimes512 \\
		\midrule
		\multirow{2}{*}{\textbf{min nESS [\%]}} 
            &MLwG &2.6&2.8&2.9&3.0\\
            &MALA&1.4&0.8&0.6&0.4\\
		\midrule
		\multirow{2}{*}{$\boldsymbol{\tau}$ \textbf{[10\textsuperscript{-6}]}} 
		&MLwG&7.4&7.4&7.4&7.4\\
		&MALA&4.8&2.5&1.8&1.4\\
		\midrule
		\multirow{2}{*}{$\boldsymbol{\alpha}$ \textbf{[\%]}}
		&MLwG&60.8&57.7&55.5&54.3\\
		&MALA&54.0&54.3&54.7&55.0\\
		\midrule
		\multirow{2}{*}{\textbf{burn-in [10\textsuperscript{3}]}}
		&MLwG&31.250&31.250&31.250&31.250\\
		&MALA&125.000&500.000&1125.000&2000.000\\
		\midrule
		\multirow{2}{*}{\textbf{max PSRF}}
		&MLwG&1.03&1.03&1.03&1.03\\
		&MALA&1.06&1.08&1.20&1.20\\
		\bottomrule
	\end{tabular}
\end{center}
\end{table}

Notice that the results from \cref{tab:MLwG_MALA} also validate the dimension-independent convergence rate in \cref{prop:conv} of MLwG.
This is because MLwG produces for all problem sizes and with the same burn-in converged chains with roughly constant PSRF.
In contrast, MALA requires significantly more burn-in with increasing dimension.
We note that despite the clear differences in sampling performance between MLwG and MALA, we do not see clear differences in the sample mean or the widths of the $90\%$ CIs. 
Therefore, we do not include these results here.

Finally, we compare the wall-clock and CPU time of the sample chains of the local \& parallel MLwG and MALA. 
All chains are run on the same hardware, specifically, Intel\textsuperscript{\textregistered} Xeon\textsuperscript{\textregistered} E5-2650 v4 processors, which are installed on a high performance computing cluster.
Furthermore, we use the optimal number of cores for MLwG, such that all blocks with indices $i\in\mathcal{U}_l$ during loop $l$ in line \ref{algline:par_upd_loop} in \Cref{alg:loc_TV} can be updated in parallel.

We show the computing times in seconds per $1000$ samples in \cref{fig:comp_times} and observe that the wall-clock time of MLwG remains almost constant and does not increase with the problem dimension.
This is because the main computational effort of updating the $64\times 64$ blocks on each core remains constant and only more time is required for handling the increasing number of cores by the main process.
For small problem sizes, the wall-clock time of MLwG is longer than that of MALA, because of the overhead of the parallelized implementation and the additional convolutions of fixed pixels in the local block likelihoods.
However, since several updates are run in parallel in MLwG, its wall-clock time is eventually shorter than that of MALA, see the time for problem size $512\times512$.
Note that the total wall-clock time of MALA is actually significantly larger, since it requires much more burn-in.
The benefits of MLwG obviously come at the cost of CPU time, which increases linearly with the number of cores.

\begin{figure}[htbp]
\centering
\includegraphics[scale=0.8]{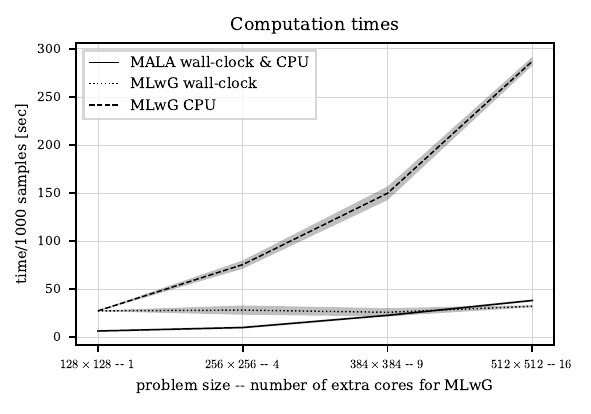}
\caption{Mean wall-clock and CPU times of MLwG and MALA. 
	For MLwG, we show the mean plus/minus the standard deviation by means of the shaded area. 
	The wall-clock and CPU time of MALA are approximately equal and are therefore not displayed separately. 
	For MLwG, we used the number of cores indicated in the x-tick labels plus one additional core to handle the main process.}
\label{fig:comp_times}
\end{figure}
\subsection{House} 
In this section, we use another test image with a different blurring kernel to compare our local \& parallel MLwG (\Cref{alg:loc_TV}) with MALA.
\subsubsection{Problem description}
The degraded image is synthetically obtained via the observation model \cref{eq:obs_model}, where $\boldsymbol{A}\in\mathbb{R}^{d\times d}$ is obtained through the discrete PSF of a motion blur kernel with the length $17$ and the angle $45^\circ$.
The noise is a realization of $\boldsymbol{\epsilon}\sim\mathcal{N}(\boldsymbol{0}, 0.01^2 \boldsymbol{I})$. 
The true and the degraded image of “house” with the size $512\times 512$ are shown in the first row of \cref{fig:house}.
\begin{figure}[htbp]
\centering
\includegraphics[scale=0.8]{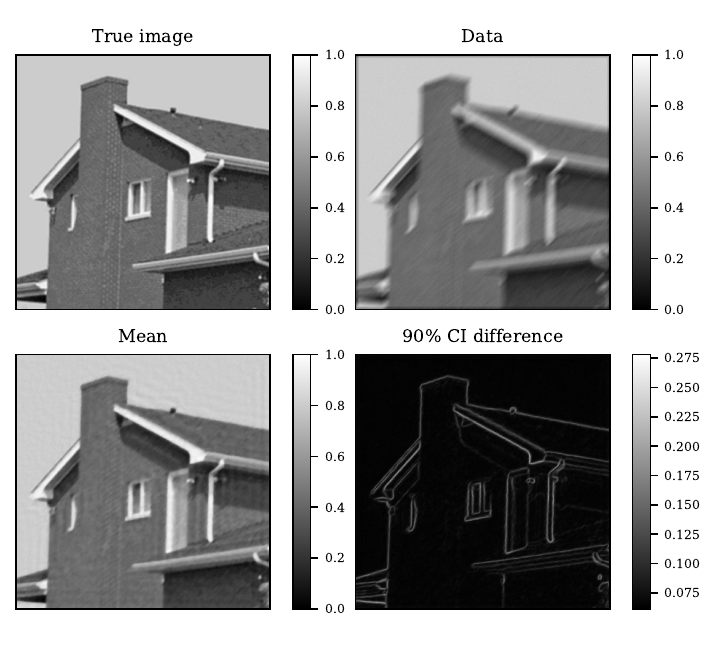}
\caption{The true image, the degraded image, the posterior sample mean, and the width of the 90\% sample CI.}
\label{fig:house}
\end{figure}
We use the adaptive total variation approach in \cite{oliveiraAdaptiveTotalVariation2009} to determine the rate parameter $\delta$ in the TV prior \cref{eq:TV_prior} and obtain $\delta=34.83$. 

\subsubsection{Posterior sampling via MLwG and MALA}
Again, we apply both the local \& parallel MLwG and MALA to sample the smoothed posterior defined in \cref{eq:smooth_pos_expl}.
In both methods, we target an acceptance rate of $0.547$ using again the method from \cite{marshallAdaptiveApproachLangevin2012b}, where we adapt the step size during burn-in.
In MLwG, we adapt the step size individually in each block.
In the second row of \Cref{fig:house}, we show the posterior mean and the width of the 90\% CI obtained via the samples from MLwG. The restoration results from MALA are neglected, since they are visually identical to the ones from MLwG. 
However, the data about the sample chains that we show in \Cref{tab:MLwG_MALA_house} reveals once again the superior performance of MLwG over MALA.
Similar as observed in the previous test, MLwG allows for a significantly larger step size, which leads to faster convergence and less correlated samples in terms of nESS.
\begin{table}[htbp]
\footnotesize
\caption{
Sampling data of MLwG and MALA for the “house” image.
The printed min nESS is the pixel-wise minimum of the mean nESS which is taken over the $5$ chains.
For MLwG, the shown step size $\tau$ and acceptance rate $\alpha$ are the means taken over all blocks and the $5$ chains.
The maximum PSRF is with respect to the pixels.
}
\label{tab:MLwG_MALA_house}
\begin{center}
	\begin{tabular}{lccccc} 
		\toprule
		&\textbf{min nESS [\%]} &$\boldsymbol{\tau}$ \textbf{[10\textsuperscript{-6}]} &$\boldsymbol{\alpha}$ \textbf{[\%]} &\textbf{burn-in [10\textsuperscript{3}]} &\textbf{max PSRF} \\
		\midrule
        MLwG&4.5&7.4&54.3&31.250&1.02\\
        MALA&0.8&1.4&55.9&2000.000&1.09\\
		\bottomrule
	\end{tabular}
\end{center}
\end{table}

\section{Conclusions}\label{sec:conc}
Uncertainty quantification in imaging problems is usually a difficult task due to the high dimensionality of images.
For image deblurring with TV prior, we presented a dimension-independent MLwG sampling algorithm. 
By exploiting the sparse conditional structure of the posterior, the proposed algorithm has dimension-independent block acceptance and convergence rates, which have been theoretically proven and numerically validated.  
To enable the use of the MALA proposal density, we used a smooth approximation of the TV prior.
We showed that the introduced error is uniformly distributed over the pixels and thus is dimension-independent.
Moreover, through numerical studies, we found that the smoothed posterior converges quickly to the exact posterior and yields feasible results for uncertainty quantification.

Various extensions of the local MALA-within-Gibbs method can be explored for imaging problems. 
For instance, the TV prior can be generalized to other difference operator-based priors that are of practical interest. 
On the implementation side, the local and parallel sampling nature of the method enables efficient GPU acceleration. 
We leave these considerations for future work.

\section*{Acknowledgments}
The work of RF and YD has been supported by a Villum Investigator grant (no.\ 25893) from The Villum Foundation. 
The work of SL has been partially funded by Singapore MOE grant A-8000459-00-00.
The work of XT has been funded by Singapore MOE grants A-0004263-00-00 and A-8000459-00-00.
We thank the anonymous reviewers for their insightful questions and comments, which contributed to a significant improvement of the manuscript.

\bibliographystyle{siamplain}
\bibliography{SISC-bib}

\begin{thebibliography}{10}

\bibitem{agrawal2022variational}
{\sc S.~Agrawal, H.~Kim, D.~Sanz-Alonso, and A.~Strang}, {\em A variational inference approach to inverse problems with gamma hyperpriors}, SIAM/ASA Journal on Uncertainty Quantification, 10 (2022), pp.~1533--1559.

\bibitem{babacanVariationalBayesianBlind2009}
{\sc S.~Babacan, R.~Molina, and A.~Katsaggelos}, {\em Variational {{Bayesian Blind Deconvolution Using}} a {{Total Variation Prior}}}, IEEE Transactions on Image Processing, 18 (2009), pp.~12--26.

\bibitem{berteroIntroductionInverseProblems2021a}
{\sc M.~Bertero, P.~Boccacci, and C.~De~MoI}, {\em Introduction to {{Inverse Problems}} in {{Imaging}}}, CRC Press, Boca Raton, 2~ed., Oct. 2021.

\bibitem{chan2005image}
{\sc T.~F. Chan and J.~Shen}, {\em Image Processing and Analysis: Variational, {{PDE}}, Wavelet, and Stochastic Methods}, SIAM, 2005.

\bibitem{pmlr-v108-chatterji20a}
{\sc N.~Chatterji, J.~Diakonikolas, M.~I. Jordan, and P.~Bartlett}, {\em Langevin monte carlo without smoothness}, in Proceedings of the Twenty Third International Conference on Artificial Intelligence and Statistics, vol.~108 of Proceedings of Machine Learning Research, PMLR, Aug 2020, pp.~1716--1726.

\bibitem{durmusEfficientBayesianComputation2018a}
{\sc A.~Durmus, {\'E}.~Moulines, and M.~Pereyra}, {\em Efficient {{Bayesian Computation}} by {{Proximal Markov Chain Monte Carlo}}: {{When Langevin Meets Moreau}}}, SIAM Journal on Imaging Sciences, 11 (2018), pp.~473--506.

\bibitem{fan2018ell}
{\sc J.~Fan, W.~Wang, and Y.~Zhong}, {\em An $l_\infty$ eigenvector perturbation bound and its application}, Journal of Machine Learning Research, 18 (2018), pp.~1--42.

\bibitem{Figueiredo2006}
{\sc M.~A. Figueiredo, J.~B. Dias, J.~P. Oliveira, and R.~D. Nowak}, {\em On total variation denoising: A new majorization-minimization algorithm and an experimental comparisonwith wavalet denoising}, in 2006 International Conference on Image Processing, IEEE, 2006, pp.~2633--2636.

\bibitem{Flock_2025}
{\sc R.~Flock}, {\em https://zenodo.org/doi/10.5281/zenodo.14956481}, Mar. 2025.

\bibitem{gelmanBayesianDataAnalysis1995}
{\sc A.~Gelman, J.~B. Carlin, H.~S. Stern, and D.~B. Rubin}, {\em Bayesian {{Data Analysis}}}, {Chapman and Hall/CRC}, 0~ed., June 1995.

\bibitem{gelmanInferenceIterativeSimulation1992}
{\sc A.~Gelman and D.~B. Rubin}, {\em Inference from {{Iterative Simulation Using Multiple Sequences}}}, Statistical Science, 7 (1992), pp.~457--472.

\bibitem{hammersley1971markov}
{\sc J.~M. Hammersley and P.~Clifford}, {\em Markov fields on finite graphs and lattices}, Unpublished manuscript, 46 (1971).

\bibitem{hu2024network}
{\sc Y.~Hu and W.~Wang}, {\em Network-adjusted covariates for community detection}, Biometrika,  (2024), pp.~1221--1240.

\bibitem{kolehmainenBayesianApproachTotal1998}
{\sc V.~Kolehmainen, E.~Somersalo, P.~Vauhkonen, M.~Vauhkonen, and J.~Kaipio}, {\em A {{Bayesian}} approach and total variation priors in {{3D}} electrical impedance tomography}, in Proceedings of the 20th {{Annual International Conference}} of the {{IEEE Engineering}} in {{Medicine}} and {{Biology Society}}., vol.~2, IEEE, 1998, pp.~1028--1031.

\bibitem{Kumar2019}
{\sc R.~Kumar, C.~Carroll, A.~Hartikainen, and O.~Martin}, {\em {{ArviZ}} a unified library for exploratory analysis of {{Bayesian}} models in {{Python}}}, Journal of Open Source Software, 4 (2019), p.~1143.

\bibitem{lassasCanOneUse2004a}
{\sc M.~Lassas and S.~Siltanen}, {\em Can one use total variation prior for edge-preserving {{Bayesian}} inversion?}, Inverse Problems, 20 (2004), pp.~1537--1563.

\bibitem{lauNonLogConcaveNonsmoothSampling2023}
{\sc T.~T.-K. Lau, H.~Liu, and T.~Pock}, {\em Non-{{Log-Concave}} and {{Nonsmooth Sampling}} via {{Langevin Monte Carlo Algorithms}}}, May 2023, \url{https://arxiv.org/abs/2305.15988}.
\newblock Preprint.

\bibitem{leeBayesianInversionTotal2013a}
{\sc J.~Lee and P.~K. Kitanidis}, {\em Bayesian inversion with total variation prior for discrete geologic structure identification}, Water Resources Research, 49 (2013), pp.~7658--7669.

\bibitem{liMarkovRandomField2009}
{\sc Li}, {\em Markov Random Field Modeling in Image Analysis}, Advances in Pattern Recognition, Springer London, 2009.

\bibitem{marshallAdaptiveApproachLangevin2012b}
{\sc T.~Marshall and G.~Roberts}, {\em An adaptive approach to {{Langevin MCMC}}}, Statistics and Computing, 22 (2012), pp.~1041--1057.

\bibitem{MR3901708}
{\sc M.~Morzfeld, X.~T. Tong, and Y.~M. Marzouk}, {\em Localization for {MCMC}: sampling high-dimensional posterior distributions with local structure}, J. Comput. Phys., 380 (2019), pp.~1--28.

\bibitem{murphyMachineLearningProbabilistic2012}
{\sc K.~P. Murphy}, {\em Machine Learning: A Probabilistic Perspective}, Adaptive Computation and Machine Learning Series, MIT Press, Cambridge, MA, 2012.

\bibitem{nesterovRandomGradientFreeMinimization2017}
{\sc Y.~Nesterov and V.~Spokoiny}, {\em Random {{Gradient-Free Minimization}} of {{Convex Functions}}}, Foundations of Computational Mathematics, 17 (2017), pp.~527--566.

\bibitem{oliveiraAdaptiveTotalVariation2009}
{\sc J.~P. Oliveira, J.~M. {Bioucas-Dias}, and M.~A. Figueiredo}, {\em Adaptive total variation image deblurring: {{A}} majorization--minimization approach}, Signal Processing, 89 (2009), pp.~1683--1693.

\bibitem{Paragios_Chen_Faugeras_2006}
{\sc N.~Paragios, Y.~Chen, and O.~D. Faugeras}, {\em Handbook of mathematical models in computer vision}, Springer Science \& Business Media, 2006.

\bibitem{Pardo_2018}
{\sc L.~Pardo}, {\em Statistical inference based on divergence measures}, Statistics: a series of textbooks and monographs, CRC Press, 2018.

\bibitem{parkBayesianLasso2008d}
{\sc T.~Park and G.~Casella}, {\em The {{Bayesian Lasso}}}, Journal of the American Statistical Association, 103 (2008), pp.~681--686.

\bibitem{pereyraProximalMarkovChain2016a}
{\sc M.~Pereyra}, {\em Proximal {{Markov}} chain {{Monte Carlo}} algorithms}, Statistics and Computing, 26 (2016), pp.~745--760.

\bibitem{robertMonteCarloStatistical2004c}
{\sc C.~P. Robert and G.~Casella}, {\em Monte {{Carlo Statistical Methods}}}, Springer {{Texts}} in {{Statistics}}, Springer New York, 2004.

\bibitem{MR1625691}
{\sc G.~O. Roberts and J.~S. Rosenthal}, {\em Optimal scaling of discrete approximations to {L}angevin diffusions}, J. R. Stat. Soc. Ser. B Stat. Methodol., 60 (1998), pp.~255--268.

\bibitem{robertsOptimalScalingVarious2001a}
{\sc G.~O. Roberts and J.~S. Rosenthal}, {\em Optimal scaling for various {{Metropolis-Hastings}} algorithms}, Statistical Science, 16 (2001).

\bibitem{rudinNonlinearTotalVariation1992}
{\sc L.~I. Rudin, S.~Osher, and E.~Fatemi}, {\em Nonlinear total variation based noise removal algorithms}, Physica D: Nonlinear Phenomena, 60 (1992), pp.~259--268.

\bibitem{somersaloImpedanceImagingMarkov1997}
{\sc E.~Somersalo, J.~P. Kaipio, M.~J. Vauhkonen, D.~Baroudi, and S.~Jaervenpaeae}, {\em Impedance imaging and {{Markov}} chain {{Monte Carlo}} methods}, in Optical {{Science}}, {{Engineering}} and {{Instrumentation}} '97, Dec. 1997, pp.~175--185.

\bibitem{MR4111677}
{\sc X.~T. Tong, M.~Morzfeld, and Y.~M. Marzouk}, {\em M{ALA}-within-{G}ibbs samplers for high-dimensional distributions with sparse conditional structure}, SIAM J. Sci. Comput., 42 (2020), pp.~A1765--A1788.

\bibitem{tong2023pca}
{\sc X.~T. Tong, W.~Wang, and Y.~Wang}, {\em Pca matrix denoising is uniform}, arXiv:2306.12690,  (2023).
\newblock Preprint.

\bibitem{vogelComputationalMethodsInverse2002}
{\sc C.~R. Vogel}, {\em Computational {{Methods}} for {{Inverse Problems}}}, {Society for Industrial and Applied Mathematics}, Jan. 2002.

\end{thebibliography}

\appendix
\section{Proofs}

\subsection{Dimension-independent approximation error}  \label{app:ApproxErr}
We mainly use Stein's method to prove \cref{thm:TV_ApproxErr}. Two technical lemmas are provided in \Cref{app:lems}. 

\subsubsection{\texorpdfstring{Proof of \cref{thm:TV_ApproxErr}}{Proof of Theorem 3.2}}
Due to the Kantorovich duality, the\newline Wasserstein-$1$ distance of $\pi_i$ and $\pi_{\varepsilon,i}$ can be written as
\[
W_1(\pi_i,\pi_{\varepsilon,i}) = \sup_{\phi_0 \in \Lip_1(\mR^q)} \int \phi_0(\bx_i) \Brac{ \pi_i (\bx_i) - \pi_{\varepsilon,i} (\bx_i) }\mdd \bx_i, 
\]
where $\Lip_1(\mR^q)$ denotes the $1$-Lipschitz function class. For any $\phi_0 \in \Lip_1$, denote
\[
\phi(\bx_i) = \phi_0(\bx_i) - \int \phi_0(\bx_i) \pi_{\varepsilon,i} (\bx_i) \mdd \bx_i = \phi_0(\bx_i) - \int \phi_0(\bx_i) \pi_{\varepsilon} (\bx) \mdd \bx.
\]
Then, $\phi \in \Lip_1^0(\pi_{\varepsilon,i})$ is the class of $1$-Lipschitz functions that are mean-zero w.r.t. $\pi_{\varepsilon,i}$: 
\begin{equation}    \label{eq:Lip10}
\Lip_1^0(\pi_{\varepsilon,i}) := \left\{ \phi \in \Lip_1(\mR^q) : \int \phi(\bx_i) \pi_{\varepsilon,i} (\bx_i)\mdd \bx_i = 0 \right\}. 
\end{equation}
Then, by definition,   
\begin{align*}
\int \phi_0(\bx_i) \Brac{ \pi_i (\bx_i) - \pi_{\varepsilon,i} (\bx_i) }\mdd \bx_i =~& \int \phi(\bx_i) \Brac{ \pi_i (\bx_i) - \pi_{\varepsilon,i} (\bx_i) }\mdd \bx_i \\
=~& \int \phi(\bx_i) \pi_i (\bx_i) \mdd \bx_i = \int \phi(\bx_i) \pi (\bx) \mdd \bx .
\end{align*}
Given $\phi(\bx_i)$, consider the Poisson equation for $u(\bx)$:
\begin{equation}    \label{eq:PoisEqn}
\Delta u(\bx) + \nabla \log \pi_\varepsilon (\bx) \cdot \nabla u(\bx) = \phi(\bx_i). 
\end{equation}
By \cref{lem:PoisGrad}, the solution exists and satisfies the gradient estimate
\[
\sum_j \norm{ \nabla_j u }_{L^\infty} \leq 2 \lambda^{-1} c^{-1}.
\]
Then by integration by parts,
\begin{align*}
\int \phi(\bx_i) \pi (\bx) \mdd \bx =~& \int \Brac{ - \nabla u(\bx) \cdot \nabla \log \pi(\bx) + \nabla \log \pi_\varepsilon (\bx) \cdot \nabla u(\bx) } \pi (\bx) \mdd \bx \\ 
\leq~& \sum_{j=1}^b \norm{ \nabla_j u }_{L^\infty} \cdot \max_j \norm{ \nabla_j \log \pi_\varepsilon -  \nabla_j \log \pi }_{L^1(\pi)}.
\end{align*}
Combining the above two inequalities, we have
\begin{equation}	\label{eq:Pf_ctr1}
\begin{split}
	\max_i W_1(\pi_i,\pi_{\varepsilon,i} ) =~& \max_i \sup_{\phi \in \Lip_1^0(\pi_{\varepsilon,i})} \int \phi(\bx_i) \pi (\bx) \mdd \bx \\
	\leq~& 2 \lambda^{-1} c^{-1} \max_j \norm{ \nabla_j \log \pi_\varepsilon -  \nabla_j \log \pi }_{L^1(\pi)}.
\end{split}
\end{equation}
Note here, $\nabla_j \log \pi$ is not defined pointwise, but it suffices to require that $\nabla_j \log \pi \in L^1(\pi)$. It suffices to control the right hand side of \cref{eq:Pf_ctr1}. By definition, 
\begin{align*}
& \nabla_j \log \pi(\bx) -  \nabla_j \log \pi_\varepsilon(\bx) = \nabla_j \varphi_\varepsilon(\bx) - \nabla_j \varphi_0(\bx) \\
=~& \delta \sum_{\alpha = 1}^d \Rectbrac{ \Brac{ (\bD_\alpha^{(v)} \bx)^2+ (\bD_\alpha^{(h)} \bx)^2 + \varepsilon }^{-1/2} - \Brac{ (\bD_\alpha^{(v)} \bx)^2+ (\bD_\alpha^{(h)} \bx)^2 }^{-1/2} } \\
&\qquad \cdot\Brac{ \Brac{\bD_\alpha^{(v)} \bx} \Brac{\bD_{\alpha,j}^{(v)}}\matT + \Brac{\bD_\alpha^{(h)} \bx} \Brac{\bD_{\alpha,j}^{(h)}}\matT } =: \delta \sum_{\alpha = 1}^d \bv_\alpha(\bx). 
\end{align*}
Here, we denote $\bD_{\alpha,j}^{(v)} \in \mR^{1\times q}$ as the $j$-th block of $\bD_{\alpha} \in \mR^{1\times d}$ and similar for $\bD_{\alpha,j}^{(h)}$. To control $\norm{\bv_\alpha(\bx)}_2$, notice
\begin{align*}
\norm{ \Brac{\bD_\alpha^{(v)} \bx} \Brac{\bD_{\alpha,j}^{(v)}}\matT + \Brac{\bD_\alpha^{(h)} \bx} \Brac{\bD_{\alpha,j}^{(h)}}\matT }_2 \leq | \bD_\alpha^{(v)} \bx | \normo{ \bD_{\alpha,j}^{(v)} }_2 + | \bD_\alpha^{(h)} \bx | \normo{\bD_{\alpha,j}^{(h)}}_2 & \\
\leq \sqrt{2} \Brac{ | \bD_\alpha^{(v)} \bx | + | \bD_\alpha^{(h)} \bx | } \leq 2 \Brac{ (\bD_\alpha^{(v)} \bx)^2+ (\bD_\alpha^{(h)} \bx)^2 }^{1/2} =: 2 \norm{\bD_\alpha \bx}_2 &.
\end{align*}
Here, we denote $\bD_\alpha \bx = ( \bD_\alpha^{(v)} \bx, \bD_\alpha^{(h)} \bx) \in \mR^2$. Notice $ \bD_{\alpha,j}^{(v)} = \bD_{\alpha,j}^{(h)}  = 0 $ if $\alpha$ does not live in block $j$ or a neighbor of block $j$, so that there are at most $(m+2)^2$ indices $\alpha$ s.t.\ $\bv_\alpha(\bx)$ is nonzero. Therefore, 
\begin{equation}    \label{eq:pf_ref2}
\begin{split}
	\| \nabla_j \log \pi(\bx) -  \nabla_j \log &\pi_\varepsilon(\bx) \|_{L^1(\pi)} \leq \delta \cdot \mE_{\bx \sim \pi} \sum_{\alpha : \bv_\alpha(\bx) \neq 0} \norm{ \bv_\alpha(\bx) }_2 \\
	\leq~& 2 \delta (m+2)^2 \max_\alpha \mE_{\bx \sim \pi} \Rectbrac{ 1 - \frac{  \norm{\bD_\alpha \bx}_2  }{ \sqrt{   \norm{\bD_\alpha \bx}_2^2+\varepsilon} } } . 
\end{split}
\end{equation}
Denote the function $I_\alpha(t) := \mE_{\bx \sim \pi} \Rectbrac{ 1 - \frac{ \norm{\bD_\alpha \bx}_2 }{\sqrt{ \norm{\bD_\alpha \bx}_2^2 + t^2} } }$. Then $I_\alpha(0) = 0$ and 
\begin{align*}
I_\alpha'(t) =~& \mE_{\bx \sim \pi} \Rectbrac{ \frac{ \norm{\bD_\alpha \bx}_2 t}{\Brac{ \norm{\bD_\alpha \bx}_2^2 + t^2}^{3/2}} } \leq \mE_{\bx \sim \pi} \Rectbrac{ \frac{ \norm{\bD_\alpha \bx}_2^2 + t^2}{2 \Brac{ \norm{\bD_\alpha \bx}_2^2 + t^2}^{3/2}} }  \\
\leq~& \mE_{\bx \sim \pi} \Rectbrac{ \frac{1}{2 \Brac{ \norm{\bD_\alpha \bx}_2^2 + t^2}^{1/2}} } \leq \frac{1}{2} \mE_{\bx \sim \pi} \norm{\bD_\alpha \bx}_2^{-1}. 
\end{align*}
By \cref{lem:Inv_Ctrl}, there exists a dimension-independent constant $C_\pi$ s.t. 
\[
    \max_\alpha \mE_{\bx \sim \pi} \norm{\bD_\alpha \bx}_2^{-1} \leq C_\pi.
\]
This implies that $I_\alpha'(t) \leq C_\pi/2 \Rightarrow I_\alpha(t) \leq C_\pi t/ 2$. So that
\[
\mE_{\bx \sim \pi} \Rectbrac{ 1 - \frac{ \norm{\bD_\alpha \bx}_2  }{ \sqrt{ \norm{\bD_\alpha \bx}_2^2+\varepsilon} } } \leq \frac{1}{2} C_\pi \varepsilon^{1/2}. 
\]
Substituting this into \cref{eq:pf_ref2}, we have
\[
\norm{ \nabla_j \log \pi(\bx) -  \nabla_j \log \pi_\varepsilon(\bx) }_{L^1(\pi)} \leq C_\pi \delta (m+2)^2 \varepsilon^{1/2}.
\]
Finally, substituting this into \cref{eq:Pf_ctr1}, we have
\[
\max_i W_1(\pi_i,\pi_{\varepsilon,i}) \leq 2 \lambda^{-1} c^{-1} \cdot C_\pi \delta (m+2)^2 \varepsilon^{1/2}.
\]
Notice we take $ \frac{\lambda}{\delta} \geq \frac{64}{c \sqrt{\varepsilon}} $ and $b>1$. The conclusion follows easily from
\[
2 \lambda^{-1} c^{-1} \cdot C_\pi \delta (m+2)^2 \varepsilon^{1/2} \leq C_\pi \cdot \frac{(m+2)^2}{32} \cdot \varepsilon =: C \varepsilon
\]
for some dimension-independent constant $C$.

\subsubsection{Some technical lemmas}   \label{app:lems}
\begin{lemma}    \label{lem:PoisGrad}
Under the assumptions in \cref{thm:TV_ApproxErr}, the solution $u(\bx)$ to the Poisson equation \cref{eq:PoisEqn} exists, unique up to a constant, and satisfies the gradient estimate
\begin{equation}    \label{eq:PoisGrad}
	\sum_j \norm{ \nabla_j u }_{L^\infty} \leq 2 \lambda^{-1} c^{-1}. 
\end{equation}
\end{lemma}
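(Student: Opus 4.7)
The plan is to represent $u$ via the Langevin semigroup attached to $\pi_\varepsilon$ and then translate the $c$-diagonal block dominance into an $\ell_1$-type contraction of the linearized flow that controls the sum of block-wise gradients.

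I would first set up the semigroup framework. Let $V_\varepsilon := -\log \pi_\varepsilon = \tfrac{\lambda}{2}\norm{\by-\bA\bx}_2^2 + \varphi_\varepsilon(\bx)$ and let $P_t$ be the Markov semigroup generated by $L = \Delta - \nabla V_\varepsilon \cdot \nabla$, i.e.\ the semigroup of the overdamped Langevin SDE $\mdd \bX_t = -\nabla V_\varepsilon(\bX_t)\,\mdd t + \sqrt{2}\,\mdd \bW_t$ with invariant density $\pi_\varepsilon$. Using \cref{def:local} and the elementary chain $\bv\matT (\bA\matT\bA) \bv \geq \sum_i M_{ii}\norm{\bv_i}^2 - \sum_{i\neq j} M_{ij}\norm{\bv_i}\norm{\bv_j} \geq c\norm{\bv}^2$, one gets $\bA\matT\bA \succeq c\boldsymbol{I}$, and combined with $\nabla^2\varphi_\varepsilon\succeq 0$ this yields $\nabla^2 V_\varepsilon \succeq \lambda c \boldsymbol{I}$, so $\pi_\varepsilon$ is $\lambda c$-strongly log-concave. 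Defining $u(\bx) := -\int_0^\infty (P_t \phi)(\bx)\,\mdd t$, Bakry--Emery gives that $P_t\phi$ is $e^{-\lambda c t}$-Lipschitz, and since $\phi$ is mean-zero with respect to $\pi_\varepsilon$ it converges exponentially to zero uniformly on bounded sets; the integral converges absolutely and the identity $\tfrac{\mdd}{\mdd t}P_t\phi = L P_t\phi$ gives $Lu = -(\lim_{t\to\infty} P_t\phi - \phi) = \phi$. Uniqueness up to a constant follows from the spectral gap.

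The gradient estimate is obtained by differentiating under the expectation and using the (random) Jacobian flow $J_t = D_{\bx}\bX_t^\bx$, which satisfies $\dot J_t = -H_t J_t$ with $H_t = \nabla^2 V_\varepsilon(\bX_t^\bx)$ and $J_0 = \boldsymbol{I}$. Because $\phi$ depends only on block $i$ and $\norm{\nabla_i \phi}_\infty \leq 1$, synchronous coupling yields
\[
\norm{\nabla_j u(\bx)}_2 \leq \int_0^\infty \mE \norm{(J_t)_{ij}}_{\rm op}\,\mdd t,
\]
where $(J_t)_{ij}$ denotes the $(i,j)$ block of $J_t$. Summing over $j$ and swapping sum and integral reduces the lemma to a dimension-independent exponentially decaying bound for the row sum $\sup_{\bx} \sum_j \mE \norm{(J_t)_{ij}^\bx}_{\rm op}$.

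The main obstacle, and the step at which the $c$-diagonal block dominance is used as its $\ell_1$ analogue of blockwise log-concavity, is to establish such a row-sum contraction. My approach is to iterate the Duhamel identity $(J_t)_{ij} = \delta_{ij}\boldsymbol{I} - \int_0^t \sum_k H_s^{ik} (J_s)_{kj}\,\mdd s$, bound $\norm{H_s^{ik}}_{\rm op}$ by $\lambda M_{ik}$ plus a contribution from $\nabla^2 \varphi_\varepsilon$, and compare componentwise with the Metzler matrix $\tilde M \in \mR^{b\times b}$ given by $\tilde M_{ii} = -\lambda M_{ii}$, $\tilde M_{ij} = \lambda M_{ij}$ for $j\neq i$, yielding $\norm{(J_t)_{ij}}_{\rm op} \leq (e^{\tilde M t})_{ij}$. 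Under the $c$-diagonal block dominance one has $(\tilde M\boldsymbol{1})_i \leq -\lambda c$ for all $i$, and since $\tilde M$ is Metzler its semigroup preserves the non-negative cone, so $\sum_j (e^{\tilde M t})_{ij} = (e^{\tilde M t}\boldsymbol{1})_i \leq e^{-\lambda c t}$. The factor $2$ in the stated bound is anticipated from splitting the strong-convexity slack $\lambda c$ into two halves: one dominates the off-diagonal blocks of $\lambda \bA\matT\bA$, the other absorbs the Hessian $\nabla^2\varphi_\varepsilon$, whose blocks may be as large as $\delta/\sqrt\varepsilon$ but are controlled by the scaling $\lambda/\delta \geq 64m/(c\sqrt\varepsilon)$ together with $m > 2r$. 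Integrating the resulting row-sum estimate over $t \in [0,\infty)$ then produces $\sum_j \norm{\nabla_j u}_{L^\infty} \leq 2(\lambda c)^{-1}$.
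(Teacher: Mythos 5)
Your proposal is essentially correct but takes a genuinely different route from the paper. The paper argues directly on the PDE: it differentiates the Poisson equation in $\bx_j$, evaluates at a point where $\normo{\nabla_j u(\bx)}_2$ attains its maximum, uses the first- and second-order optimality conditions to show $(\nabla_j u)\matT \mcL(\nabla_j u) \leq 0$ there, and thereby extracts a system of linear inequalities in the numbers $\normo{\nabla_j u}_{L^\infty}$ governed by $\lambda M$ plus the off-diagonal Hessian blocks of $\varphi_\varepsilon$; summing over $j$ and invoking the diagonal dominance gives the $\lambda c/2$ slack and hence the constant $2\lambda^{-1}c^{-1}$. You instead construct $u$ via the Langevin semigroup and control $\sum_j \normo{\nabla_j u}_{L^\infty}$ through an $\ell_1$ row-sum contraction of the Jacobian flow, compared against a Metzler matrix. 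Both proofs consume exactly the same structural inputs ($c$-diagonal block dominance, convexity of $\varphi_\varepsilon$, the bound $\normo{(\nabla^2\varphi_\varepsilon)_{jk}}_2 \leq 8m\delta\varepsilon^{-1/2}$ for the at most $4$ neighboring blocks, and the scaling $\lambda/\delta \geq 64m/(c\sqrt{\varepsilon})$), and both land on the same constant. Your route buys existence of $u$ and the gradient bound in one stroke and sidesteps the question of whether the supremum of $\normo{\nabla_j u}$ is actually attained (a point the paper's maximum-principle argument glosses over); the paper's route is more elementary in that it needs no stochastic flow or differentiation under the expectation.

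Two details in your sketch need tightening. First, "iterating the Duhamel identity" and bounding $\normo{H_s^{ik}}_{\rm op}$ for all $k$ would destroy the diagonal decay: you must keep the sign of the diagonal term, e.g.\ by deriving the differential inequality $D^+\normo{(J_t)_{kj}}_{\rm op} \leq -\lambda M_{kk}\normo{(J_t)_{kj}}_{\rm op} + \sum_{l\neq k}\bigl(\lambda M_{kl} + 8m\delta\varepsilon^{-1/2}\mathbf{1}_{k\sim l}\bigr)\normo{(J_t)_{lj}}_{\rm op}$ (using $(H_t)_{kk} \succeq \lambda M_{kk}I$) and only then applying the Metzler comparison; your written $\tilde M$ also omits the prior's off-diagonal Hessian contribution that your final paragraph correctly says must be absorbed by half the slack. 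Second, the bound $8m\delta\varepsilon^{-1/2}$ on the off-diagonal blocks of $\nabla^2\varphi_\varepsilon$ is asserted rather than derived; it comes from counting the $2m$ boundary indices between neighboring blocks (not from $m>2r$, which plays no role here), and it is precisely what fixes the constant $64m$ in the hypothesis. With these repairs your argument goes through and reproduces \cref{eq:PoisGrad}.
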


\begin{proof}
Denote the operator
\[
\mcL u(\bx) := \Delta u(\bx) + \nabla \log \pi_\varepsilon (\bx) \cdot \nabla u(\bx). 
\]
For simplicity, we still denote $\mcL\bv(\bx)$ when $\bv$ is a vector-valued function
\[
\mcL \bv(\bx) := ( \mcL \bv_1(\bx), \dots \mcL \bv_b(\bx) ) \matT. 
\]
It suffices to prove \cref{eq:PoisGrad} for $\phi \in C^1 \cap \Lip_1^0(\pi_{\varepsilon,i}) $. Note this space is dense in $\Lip_1^0(\pi_{\varepsilon,i})$, so for general $\phi \in \Lip_1(\pi_{\varepsilon,i})$, we can take a sequence of $\phi_k \in C^1 \cap \Lip_1^0(\pi_{\varepsilon,i})$ that converges to $\phi$. \cref{eq:PoisGrad} holds uniformly for $\phi_k$, so that passing the limit shows that it holds for any $\phi \in \Lip_1^0(\pi_{\varepsilon,i})$. Now fix any $\phi\in C^1 \cap \Lip_1^0(\pi_{\varepsilon,i})$. It is straightforward to verify by standard elliptic theory that the solution exists (up to a constant) and $u \in C^3$. Taking the gradient w.r.t.\ $\bx_j$ in \cref{eq:PoisEqn}, we obtain
\begin{equation}	\label{eq:pf_blk_grad}
	\nabla_j \phi (\bx_i) = \mcL(\nabla_j u) (\bx) + \sum_{k} \nabla_{jk}^2 \log \pi_\varepsilon (\bx) \nabla_k u(\bx). 
\end{equation}
Recall by definition \cref{eq:smooth_pos_expl}, $\pi_\varepsilon(\bx) \propto \exp ( - l(\bx) - \varphi_\varepsilon(\bx) )$. Direct computation shows
\[
\nabla^2 \log \pi_\varepsilon (\bx) = - \nabla^2 l(\bx) - \nabla^2 \varphi_\varepsilon(\bx) = - \lambda \bA\matT \bA - \nabla^2 \varphi_\varepsilon (\bx),
\]
where the $(i,j)$-th subblock of $\nabla^2 \varphi_\varepsilon (\bx)$ is given by
\begin{equation}	\label{eq:phi_Hess}
	\begin{split}
		&\nabla_{ij}^2 \varphi_\varepsilon (\bx) = \delta \sum_{\alpha=1}^d \Brac{ (\bD_\alpha^{(v)} \bx)^2+ (\bD_\alpha^{(h)} \bx)^2 + \varepsilon }^{-3/2} \\
		& \cdot \Big[ \Brac{ (\bD_\alpha^{(v)} \bx) \bD_{\alpha,i}^{(v)} - (\bD_\alpha^{(h)} \bx) \bD_{\alpha,i}^{(h)} } \matT \Brac{ (\bD_\alpha^{(v)} \bx) \bD_{\alpha,j}^{(v)} - (\bD_\alpha^{(h)} \bx) \bD_{\alpha,j}^{(h)} } \\
		& \quad + \varepsilon \Brac{ (\bD_{\alpha,i}^{(v)})\matT \bD_{\alpha,j}^{(v)} + (\bD_{\alpha,i}^{(h)})\matT \bD_{\alpha,j}^{(h)} } \Big]. 
	\end{split}
\end{equation}
For simplicity, denote $H(\bx) = \nabla^2 \varphi_\varepsilon(\bx)$. Multiplying \cref{eq:pf_blk_grad} from left by $ - \Brac{\nabla_j u (\bx)}\matT$, 
\begin{equation}	\label{eq:pf_ip_grad}
	\begin{split}
		- \Brac{\nabla_j u (\bx)}\matT \nabla_j \phi (\bx_i) +& \Brac{\nabla_j u (\bx)}\matT \mcL(\nabla_j u) (\bx) \\
		=~& \sum_k \Brac{\nabla_j u (\bx)}\matT ( \lambda \bA\matT \bA + H (\bx))_{jk} \nabla_k u(\bx).
	\end{split}
\end{equation}
Since $\bA\matT \bA$ is $c$-diagonal block dominant with matrix $M\in\mR^{b\times b}$, and $H_{jj}$ is positive definite, since $\varphi_\varepsilon$ is convex, we have 
\begin{align*}
	& \sum_k \Brac{\nabla_j u (\bx)}\matT (\lambda \bA\matT \bA + H(\bx) )_{jk} \nabla_k u(\bx) \\
	=~& \Brac{\nabla_j u (\bx)}\matT (\lambda \bA\matT \bA + H(\bx) )_{jj} \nabla_j u(\bx) + \sum_{k\neq j} \Brac{\nabla_j u (\bx)}\matT (\lambda \bA\matT \bA + H(\bx) )_{jk} \nabla_k u(\bx) \\
	\geq~& \lambda M_{jj} \normo{ \nabla_j u(\bx) }_2^2 - \lambda \sum_{k\neq j} ( M_{jk} +  \normo{ H_{jk} (\bx) }_2 ) \normo{ \nabla_j u(\bx) }_2 \normo{ \nabla_k u(\bx) }_2. 
\end{align*}
Next we control $\normo{ H_{jk} (\bx) }_2$. Note $\bD^{(v)}_{\alpha,i}=0$ if $\alpha$ does not live in block $i$ or a vertical neighbor of block $i$. Similarly for $\bD^{(h)}_{\alpha,i}$. Therefore, $ H_{ij} (\bx) = 0$ if $i$ and $j$ are not neighbors. For vertical neighbors $i,j$, there are exactly $2m$ boundary indices $\alpha$ s.t.\ $ \Brac{\bD^{(v)}_{\alpha,i}}\matT \bD^{(v)}_{\alpha,j} \neq 0$, and $\norm{ \Brac{\bD^{(v)}_{\alpha,i}}\matT \bD^{(v)}_{\alpha,j} }_2 \leq 2$. Moreover, 
\begin{align*}
	\norm{ \Brac{ (\bD_\alpha^{(v)} \bx) \bD_{\alpha,i}^{(v)} - (\bD_\alpha^{(h)} \bx) \bD_{\alpha,i}^{(h)} } \matT \Brac{ (\bD_\alpha^{(v)} \bx) \bD_{\alpha,j}^{(v)} - (\bD_\alpha^{(h)} \bx) \bD_{\alpha,j}^{(h)} } }_2 &\\
	\leq \Brac{ \sqrt{2} |\bD_\alpha^{(v)} \bx| + \sqrt{2} |\bD_\alpha^{(h)} \bx| }^2 \leq 4 \Brac{ (\bD_\alpha^{(v)} \bx)^2 + (\bD_\alpha^{(h)} \bx )^2 } &. 
\end{align*}
Therefore, recall \cref{eq:phi_Hess}. It holds that
\begin{equation}    \label{eq:phi_Hess_bound}
	\begin{split}
		\norm{ H_{ij} (\bx) }_2 \leq~& \delta \sum_{ \alpha \text{ in boundary} } \Brac{ (\bD_\alpha^{(v)} \bx)^2+ (\bD_\alpha^{(h)} \bx)^2 + \varepsilon }^{-3/2} \\
		& \cdot \Rectbrac{ 4 \Brac{ (\bD_\alpha^{(v)} \bx)^2 + (\bD_\alpha^{(h)} \bx )^2 } + 4 \varepsilon } \\
		\leq~& 4 \delta \sum_{ \alpha \text{ in boundary} } \Brac{ (\bD_\alpha^{(v)} \bx)^2+ (\bD_\alpha^{(h)} \bx)^2 + \varepsilon }^{-1/2} \leq 8 m \delta \varepsilon^{-1/2}.
	\end{split}
\end{equation}
A similar result holds when $i,j$ are horizontal neighbors. Also notice $H_{jk}(\bx) = 0$ when $j \not\sim k$, where we denote $j\sim k$ if $j\neq k$ are neighboring blocks. Now substitute the above inequalities into \cref{eq:pf_ip_grad}. We have 
\begin{equation}	\label{eq:pf_ip_grad2}
	\begin{split}
		&- \Brac{\nabla_j u (\bx)}\matT \nabla_j \phi (\bx_i) + \Brac{\nabla_j u (\bx)}\matT \mcL(\nabla_j u) (\bx) \\
		\geq~& \lambda M_{jj} \normo{ \nabla_j u(\bx) }_2^2 - \sum_{k\neq j} ( \lambda M_{jk} + 8 m \delta \varepsilon^{-1/2} {\bf{1}}_{j\sim k} ) \normo{ \nabla_j u(\bx) }_2 \normo{ \nabla_k u(\bx) }_2. 
	\end{split}
\end{equation}
Consider $\bx$ where $\normo{ \nabla_j u(\bx) }_2$ reaches its maximum, i.e. $ \normo{ \nabla_j u(\bx) }_2 = \norm{ \nabla_j u }_{L^\infty} $. The first order optimality condition reads
\[
0 = \nabla \Brac{ \normo{ \nabla_j u(\bx) }_2^2 } = \nabla \nabla_j u(\bx) \cdot \nabla_j u(\bx), 
\]
and the second order optimality condition reads 
\[
    0 \geq \Delta \Brac{ \normo{ \nabla_j u(\bx) }_2^2 } = 2 \normo{ \nabla \nabla_j u(\bx) }_{\rm F}^2 + 2 (\nabla_j u(\bx))\matT \Delta \nabla_j u(\bx).
\]
Thus, $ (\nabla_j u(\bx))\matT \Delta \nabla_j u(\bx) \leq 0$, since $\normo{ \nabla \nabla_j u(\bx) }_{\rm F}^2 \geq 0$. Under these conditions,  
\[
(\nabla_j u(\bx))\matT \mcL (\nabla_j u(\bx)) = (\nabla_j u(\bx))\matT \Delta \nabla_j u(\bx) + \nabla \log \pi_\varepsilon(\bx) \cdot \nabla \nabla_j u(\bx) \cdot \nabla_j u(\bx) \leq 0.
\]
Hence, at the maximum point \cref{eq:pf_ip_grad2} reads 
\begin{align*}
	\lambda M_{jj} \normo{ \nabla_j u }_{L^\infty}^2 -& \sum_{k\neq j} ( \lambda M_{jk} + 8 m \delta \varepsilon^{-1/2} {\bf{1}}_{j\sim k} ) \norm{ \nabla_j u }_{L^\infty} \normo{ \nabla_k u(\bx) }_2 \\
	\leq~& - \Brac{\nabla_j u (\bx)}\matT \nabla_j \phi (\bx_i) + \Brac{\nabla_j u (\bx)}\matT \mcL(\nabla_j u) (\bx) \\
	\leq~& \normo{\nabla_j u(\bx)}_2 \normo{\nabla_j \phi(\bx_i)}_2 + 0 \leq \delta_{ij} \normo{\nabla_j u}_{L^\infty} .
\end{align*}
Here, we use $ \normo{\nabla_j \phi(\bx_i)}_2 \leq \delta_{ij}$, since $\phi$ is $1$-Lipschitz and is only a function of $\bx_i$. So if $\normo{ \nabla_j u }_{L^\infty} > 0$, it holds that
\[
	\delta_{ij} \geq \lambda M_{jj} \normo{ \nabla_j u }_{L^\infty} - \sum_{k\neq j} ( \lambda M_{jk} + 8 m \delta \varepsilon^{-1/2} {\bf{1}}_{j\sim k} ) \normo{ \nabla_k u }_{L^\infty}. 
\]
Taking summation over $j \in \mcJ:= \{ j\in[b]: \normo{ \nabla_j u }_{L^\infty} > 0\}$ gives
\begin{equation}  \label{eq:tmp1}
	1 \geq \sum_{j\in \mcJ} \delta_{ij} \geq \sum_{j \in \mcJ} \normo{ \nabla_j u }_{L^\infty} \Rectbrac{ \lambda M_{jj} - \sum_{k\neq j} ( \lambda M_{kj} + 8 m \delta \varepsilon^{-1/2} {\bf{1}}_{j\sim k} ) }. 
\end{equation}
Notice $\# \{k: j \sim k\} \leq 4$, and we take $ \frac{\lambda}{\delta} \geq \frac{64m}{c\sqrt{\varepsilon}}$, so that 
\begin{equation}    \label{eq:diag_dominant}
\begin{split}
    &\lambda M_{jj} - \sum_{k\neq j} ( \lambda M_{kj} + 8 m \delta \varepsilon^{-1/2} {\bf{1}}_{j\sim k} ) \\
    \geq~& \lambda \Brac{ M_{jj} - \sum_{k\neq j} M_{kj} } - 32 m \delta \varepsilon^{-1/2} \geq \lambda c - 32 m \delta \varepsilon^{-1/2} \geq \frac{\lambda c}{2}.
\end{split}
\end{equation}
Together with \eqref{eq:tmp1}, we obtain \eqref{eq:PoisGrad}.
\end{proof}

\begin{lemma}	\label{lem:Inv_Ctrl}
There exists a dimension-independent constant $C_\pi$ s.t.\
\[
    \max_\alpha \mE_{\bx \sim \pi} \norm{\bD_\alpha \bx}_2^{-1} \leq C_\pi.
\]
\end{lemma}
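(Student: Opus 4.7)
I would reduce the $d$-dimensional expectation to a two-dimensional integral over $\bu=\bD_\alpha\bx\in\mathbb{R}^2$ by pushing forward under $\bD_\alpha$, and then exploit the strong log-concavity of $\pi$ inherited from the quadratic likelihood. To set things up, observe first that the $c$-diagonal block dominance of $\bA\matT\bA$ yields the global spectral bound $\bA\matT\bA\succeq cI$ on $\mathbb{R}^d$: for $\bx=(\bx_1,\ldots,\bx_b)$ with $a_i=\|\bx_i\|_2$, properties 1-2 in \cref{def:local} give $\bx\matT\bA\matT\bA\bx\geq \sum_i M_{ii}a_i^2 - \sum_{i\neq j}M_{ij}a_i a_j = a\matT\tilde M a$ with $\tilde M_{ii}=M_{ii}$, $\tilde M_{ij}=-M_{ij}$, and property 3 renders $\tilde M$ diagonally dominant with dominance constant $c$, so $\tilde M\succeq cI$ by Gershgorin and $\bx\matT\bA\matT\bA\bx\geq c\|\bx\|_2^2$. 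Consequently $V:=-\log\pi=l+\delta\TV+\text{const}$ is $\lambda c$-strongly convex and $\pi$ is $\lambda c$-strongly log-concave on $\mathbb{R}^d$.

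The core step is to show that the push-forward marginal $\rho_\alpha:=(\bD_\alpha)_\ast\pi$ is strongly log-concave on $\mathbb{R}^2$ with a dimension-independent parameter. I would introduce the change of variables $\bx=\bD_\alpha^+\bu+P\bw$, where $\bD_\alpha^+:=\bD_\alpha\matT(\bD_\alpha\bD_\alpha\matT)^{-1}$ is the Moore-Penrose pseudoinverse and $P\in\mathbb{R}^{d\times(d-2)}$ is an orthonormal basis of $\ker(\bD_\alpha)$. A direct Gram-matrix computation gives $\begin{pmatrix}\bD_\alpha^+ & P\end{pmatrix}\matT\begin{pmatrix}\bD_\alpha^+ & P\end{pmatrix}=\mathrm{diag}((\bD_\alpha\bD_\alpha\matT)^{-1},I_{d-2})$ (the cross block vanishes since $\bD_\alpha P=0$), so the lifted potential $\tilde V(\bu,\bw):=V(\bD_\alpha^+\bu+P\bw)$ is jointly $\mu_0$-strongly convex with $\mu_0:=\lambda c\min(\sigma_{\max}^{-2}(\bD_\alpha),1)$. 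Since $\bD_\alpha$ has at most four nonzero entries in $\{-1,+1\}$ supported in a $3\times 3$ window around pixel $\alpha$, $\sigma_{\max}(\bD_\alpha)$ is dimension-independent (for an interior pixel $\bD_\alpha\bD_\alpha\matT=\begin{pmatrix}2&1\\1&2\end{pmatrix}$ yields $\sigma_{\max}^2=3$, so $\mu_0=\lambda c/3$). Applying the Pr\'ekopa-Leindler inequality to $\tilde V - (\mu_0/2)(\|\bu\|_2^2+\|\bw\|_2^2)$ (which remains convex by construction) then shows that $-\log\int e^{-\tilde V(\bu,\bw)}d\bw$ is $\mu_0$-strongly convex in $\bu$, i.e., $\rho_\alpha$ is $\mu_0$-strongly log-concave on $\mathbb{R}^2$, and the standard Gaussian envelope bound $\int e^{-W}\geq e^{-W(\bu^*)}(2\pi/\mu_0)$ yields $\|\rho_\alpha\|_\infty\leq\mu_0/(2\pi)$.

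To finish, I would write $\mathbb{E}_\pi\|\bD_\alpha\bx\|_2^{-1}=\int_{\mathbb{R}^2}\|\bu\|_2^{-1}\rho_\alpha(\bu)\,d\bu$ and split at radius $R>0$: by local integrability of $\|\bu\|_2^{-1}$ in $\mathbb{R}^2$ (indeed $\int_{\|\bu\|\leq R}\|\bu\|_2^{-1}d\bu=2\pi R$), the disk contribution is at most $\|\rho_\alpha\|_\infty\cdot 2\pi R$, while the exterior contribution is at most $R^{-1}$ by total probability. Optimizing over $R$ yields
\[
\mathbb{E}_\pi\|\bD_\alpha\bx\|_2^{-1}\leq 2\sqrt{2\pi\|\rho_\alpha\|_\infty}\leq 2\sqrt{\mu_0}=:C_\pi,
\]
a dimension-independent constant.

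The main obstacle lies in the change-of-variables step, specifically ensuring that the strong-convexity parameter degrades only by the dimension-independent factor $\sigma_{\max}^2(\bD_\alpha)=O(1)$; this crucially relies on the local, bounded-support structure of the finite-difference operator and the fact that its Gram matrix on the nontrivial rows has dimension-independent spectrum. Boundary pixels, where $\bD_\alpha$ may have rank one, require a minor one-dimensional analogue of the same argument, which yields a comparable bound.
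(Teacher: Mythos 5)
Your overall strategy --- reduce to the two-dimensional law $\rho_\alpha$ of $\bD_\alpha\bx$, bound its density uniformly by a dimension-independent constant, and split the integral of $\normo{\bu}_2^{-1}$ at a radius $R$ --- is exactly the skeleton of the paper's proof. The gap is in the step that produces the density bound. You derive $\normo{\rho_\alpha}_\infty\le\mu_0/(2\pi)$ from $\mu_0$-strong log-concavity, but strong log-concavity gives the \emph{opposite} inequality: writing $\rho_\alpha\propto e^{-W}$ with $W$ $\mu_0$-strongly convex and minimized at $\bu^*$, one has $W(\bu)\ge W(\bu^*)+\tfrac{\mu_0}{2}\normo{\bu-\bu^*}_2^2$, hence $\int e^{-(W-W(\bu^*))}\le 2\pi/\mu_0$ and therefore $\rho_\alpha(\bu^*)\ge\mu_0/(2\pi)$ --- a \emph{lower} bound on the peak. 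The ``Gaussian envelope'' inequality $\int e^{-W}\ge e^{-W(\bu^*)}(2\pi/\mu_0)$ that you invoke would require $W(\bu)\le W(\bu^*)+\tfrac{\mu_0}{2}\normo{\bu-\bu^*}_2^2$, i.e.\ an upper bound on the growth of the potential, which strong convexity does not supply. A concrete counterexample: $W(\bu)=\tfrac{\mu_0}{2}\normo{\bu}_2^2+N\normo{\bu}_1$ is $\mu_0$-strongly convex for every $N$, yet the normalized density at the origin exceeds $N^2/4\to\infty$. This is not a removable technicality here, because the target contains exactly such a term: the TV potential $\delta\normo{\bD_\alpha\bx}_2$ has a kink at $\bD_\alpha\bx=0$, which piles extra mass of $\rho_\alpha$ onto the singularity of the integrand $\normo{\bu}_2^{-1}$; any correct constant must therefore involve $\delta$, whereas your $\mu_0=\lambda c/3$ does not.

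What is actually needed --- and what the paper supplies --- is an upper bound on how fast $-\log\pi$ can \emph{increase} as the two coordinates $\bD_\alpha\bx$ move away from an arbitrary base point: the quadratic likelihood is $L$-smooth in those two coordinates with a dimension-independent $L$ (only a bounded number of rows of $\bA$ involve pixel $\alpha$ and its neighbors), and the TV part is globally $8\delta$-Lipschitz in them. After a unit-Jacobian shear so that $\bz_{\alpha+}=\bD_\alpha\bx$ become coordinates, integrating the resulting pointwise lower bound on the conditional density over a unit ball and disposing of the linear term by Jensen's inequality and the symmetry of the ball yields the uniform bound $|B_1|^{-1}e^{L/2+8\delta}$, which passes to the marginal of $\bz_{\alpha+}$. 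Your strong-convexity computation ($\bA\matT\bA\succeq cI$ from $c$-diagonal block dominance, and preservation of strong log-concavity under marginalization via Pr\'ekopa) is correct but plays no role in upper-bounding $\normo{\rho_\alpha}_\infty$; if you replace that step with the smoothness-plus-Lipschitz argument, the remainder of your proof (the split at radius $R$ and the optimization) goes through.
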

\begin{proof}
Fix any $\alpha \in [d]$. For simplicity, denote
\[
\bD_\alpha \bx = (\bD_\alpha^{(v)} \bx , \bD_\alpha^{(h)} \bx ) = ( \bx_\alpha- \bx_{\alpha}^{(v)}, \bx_\alpha- \bx_{\alpha}^{(h)}).
\]  
Now introduce the change of variable $\bz = T \bx$ for some linear map determined via 
\[
\bz_{\alpha}^{(v)} = \bx_{\alpha}^{(v)} - \bx_\alpha, \quad \bz_{\alpha}^{(h)} = \bx_{\alpha}^{(h)} - \bx_\alpha,
\]
and $\bz_{\alpha-} = \bx_{\alpha-} \in \mR^{d-2}$ for the other coordinates. Accordingly, $\pi(\bx)$ is transformed into another distribution $\mu(\bz) = \pi(T^{-1}\bz)$ (note $\det T = 1$). Also note that $T^{-1}$ admits explicit form, i.e.
\[
\bx_{\alpha}^{(v)} = \bz_{\alpha}^{(v)} + \bz_\alpha, \quad \bx_{\alpha}^{(h)} = \bz_{\alpha}^{(h)} + \bz_\alpha, \quad \bx_{\alpha-} = \bz_{\alpha-}.
\]
Denote $\bz_{\alpha+} = (\bz_{\alpha}^{(v)} ,\bz_{\alpha}^{(h)} )$ for convenience. Consider the factorization
\[
\mu(\bz) = \mu(\bz_{\alpha+}, \bz_{\alpha-}) = \mu(\bz_{\alpha+} | \bz_{\alpha-}) \mu(\bz_{\alpha-}),
\]
where $\mu(\bz_{\alpha-})$ denotes the marginal of $\mu$ on $\bz_{\alpha-}$. Notice 
\[
\log \mu(\bz_{\alpha+} | \bz_{\alpha-}) = - \frac{\lambda}{2} \norm{ \by - \bA T^{-1} \bz }_2^2 - \delta \sum_{\alpha=1}^d \norm{ \bD_\alpha (T^{-1} \bz)}_2 - \log \mu (\bz_{\alpha-}) + \text{const}. 
\]
Fix $\bz_{\alpha -}$ for the moment. Notice $ \frac{\lambda}{2} \norm{ \by - \bA T^{-1} \bz }_2^2$ is $L$-smooth for some dimension-independent $L>0$, since only a dimension-independent number of coordinates of $\bA T^{-1} \bz$ depend on $\bz_{\alpha-}$. Therefore, fix any $\bz_{\alpha+}^0$, so that
\begin{align*}
	\frac{\lambda}{2} \norm{ \by - \bA T^{-1} (\bz_{\alpha +}, \bz_{\alpha -}) }_2^2 - \frac{\lambda}{2} \norm{ \by - \bA T^{-1} (\bz_{\alpha +}^0,\bz_{\alpha -})}_2^2 - \bv^0 \cdot (\bz_{\alpha +} - \bz_{\alpha +}^0) &\\
    \leq \frac{L}{2} \norm{ \bz_{\alpha +} - \bz_{\alpha +}^0}_2^2 &,
\end{align*}
where $\bv^0$ is the gradient w.r.t. $\bz_{\alpha +}$ of $\frac{\lambda}{2}\norm{ \by - \bA T^{-1} \bz }_2^2$ at $\bz_{\alpha +}^0$. Notice also 
\[
\delta \sum_{\alpha=1}^d \norm{ \bD_\alpha (T^{-1} (\bz_{\alpha +}, \bz_{\alpha -}) ) }_2 \leq \delta \sum_{\alpha=1}^d \norm{ \bD_\alpha ( T^{-1} (\bz_{\alpha +}^0, \bz_{\alpha -}) ) }_2 + 8 \delta \norm{ \bz_{\alpha +} - \bz_{\alpha +}^0}_2,
\]
since changing $\bz_{\alpha}^{(v)}$ or $\bz_{\alpha}^{(h)}$ affects $4$ finite difference terms in the summation. Combining the above controls, when $\bz_{\alpha+} \in B_1(\bz_{\alpha+}^0) := \{ \bz_{\alpha+} : \norm{ \bz_{\alpha+} - \bz_{\alpha+}^0 }_2 \leq 1 \}$, it holds that 
\begin{align*}
	\log \mu(\bz_{\alpha+} | \bz_{\alpha-}) -& \log \mu(\bz_{\alpha+}^0 | \bz_{\alpha-}) + \bv^0 \cdot (\bz_{\alpha +} - \bz_{\alpha +}^0) \\
    \geq& - \frac{L}{2} \norm{ \bz_{\alpha +} - \bz_{\alpha +}^0}_2^2 - 8 \delta \norm{ \bz_{\alpha +} - \bz_{\alpha +}^0}_2 \geq - \frac{L}{2} - 8 \delta. 
\end{align*}
Therefore, 
\begin{align*}
	1 =~& \int \mu(\bz_{\alpha+} | \bz_{\alpha-}) \mdd \bz_{\alpha+} \geq \int_{B_1(\bz_{\alpha+}^0)} \exp \Brac{ \log \mu(\bz_{\alpha+} | \bz_{\alpha-}) } \mdd \bz_{\alpha+} \\
	\geq~& \mu (\bz_{\alpha+}^0 | \bz_{\alpha-}) \exp \Brac{ - \frac{L}{2} - 8 \delta } \int_{B_1(\bz_{\alpha+}^0)} \exp \Brac{ - \bv^0 \cdot (\bz_{\alpha +} - \bz_{\alpha +}^0) } \mdd \bz_{\alpha+} \\
	\geq~& \mu (\bz_{\alpha+}^0 | \bz_{\alpha-}) \exp \Brac{ - \frac{L}{2} - 8 \delta } |B_1|, 
\end{align*}
where we use the Jensen's inequality and the symmetry of $B_1(\bz_{\alpha-}^0)$. Therefore, 
\[
\mu (\bz_{\alpha+}^0 | \bz_{\alpha-}) \leq |B_1|^{-1} \exp \Brac{ \frac{L}{2} + 8 \delta } .
\]
This holds for arbitrary $\bz_{\alpha+}^0$, so that the marginal distribution of $\mu(\bz_{\alpha+})$ satisfies 
\[
    \mu(\bz_{\alpha+}) = \int \mu(\bz_{\alpha+} | \bz_{\alpha-}) \mu(\bz_{\alpha-}) \mdd \bz_{\alpha-} \leq |B_1|^{-1} \exp \Brac{ \frac{L}{2} + 8 \delta } =: C'.
\]
Note $C'$ is dimension-independent. Finally, notice 
\begin{align*}
	\mE_{\bx \sim \pi} \norm{\bD_\alpha \bx}_2^{-1} =~& \int_{\mR^2} \frac{\mu(\bz_{\alpha+})}{\norm{ \bz_{\alpha+} }_2 } \mdd \bz_{\alpha}^{(v)} \mdd \bz_{\alpha}^{(h)}  \leq 1 + \int_{ \norm{ \bz_{\alpha+} }_2 \leq 1} \frac{C'}{\norm{ \bz_{\alpha+} }_2 } \mdd \bz_{\alpha}^{(v)} \mdd \bz_{\alpha}^{(h)} \\
	=~& 1 + \int_0^{2\pi} \int_0^1 \frac{C'}{r} \cdot r \mdd r \mdd \hat\alpha = 1 + 2 \pi C' =: C_\pi.
\end{align*}
Thus, $C_\pi$ is dimension-independent. This completes the proof. 
\end{proof}

\subsection{\texorpdfstring{Proof of \cref{lem:cond_one_pixel}}{Proof of Lemma 5.1}}\label{app:loc_cond}

In this proof, we work with the matrix representation $\bX\in\mathbb{R}^{n \times n}$ for the unknown image such that the pixel at the index tuple $(\alpha,\beta) \in [n] \times [n]$ is given by $\bX_{\alpha,\beta}$.
Moreover, we use the notation $(\alpha\pm r) \coloneqq (\alpha-r, \alpha-r+1,\dots, \alpha+r-1, \alpha+r)$,
and we let $w:(0\pm r)\times (0\pm r)\to\mathbb{R}_{>0}$ be the weights of the discrete PSF of the convolution kernel.

With this notation, the maximal cliques in the posterior factorization \cref{eq:clique_pots} can be written as
\begin{align}\label{eq:max_clique_mat}
\begin{split}
	V_{\alpha,\beta}( \bX ) &= \frac\lambda2 ( \bY_{\alpha,\beta} - \sum_{\mu,\nu=-r}^{r} w(\mu,\nu) \bX_{\alpha +\mu, \beta + \nu} )^2 \\
	&~~~~+ \delta \sqrt{ (\bX_{\alpha+1,\beta}-\bX_{\alpha,\beta})^2 + (\bX_{\alpha,\beta+1}-\bX_{\alpha,\beta})^2 + \varepsilon}.
\end{split}
\end{align}

We now consider the full conditional of a fixed pixel 
$\pi(\bX_{\hat\alpha,\hat\beta}|\bX_{\setminus(\hat\alpha,\hat\beta)})$, where 
$\setminus(\hat\alpha,\hat\beta) \coloneqq [n] \times [n] \setminus (\hat\alpha,\hat\beta)$.
To find an expression for $\pi(\bX_{\hat\alpha,\hat\beta}|\bX_{\setminus(\hat\alpha,\hat\beta)})$, it suffices to find all maximal cliques $V_{\alpha,\beta}$ which depend on $\bX_{\hat\alpha,\hat\beta}$.
This follows from the Hammersley-Clifford theorem \cite{hammersley1971markov} and the fact that the posterior \cref{eq:smooth_pos_expl} is a Gibbs density.
Therefore, let $\Theta_{\hat\alpha,\hat\beta} \subseteq [n] \times [n]$ be the set of all maximal cliques $V_{\alpha,\beta}$ which depend on $\bX_{\hat\alpha,\hat\beta}$, and let $\Phi_{\hat\alpha,\hat\beta} \subseteq [n] \times [n]$ be the set of pixels on which the cliques $\{V_{\alpha,\beta}\,|\,(\alpha,\beta)\in \Theta_{\hat\alpha,\hat\beta}\}$ depend, i.e., $\Phi_{\hat\alpha,\hat\beta}$ is the neighborhood of $\bX_{\hat\alpha,\hat\beta}$.
Thus, we can write 
$$
\pi(\bX_{\hat\alpha,\hat\beta}|\bX_{\setminus(\hat\alpha,\hat\beta)}) 
\propto \pi(\bX_{\hat\alpha,\hat\beta}|\bX_{\Phi_{\hat\alpha,\hat\beta}\setminus(\hat\alpha,\hat\beta)}) 
\propto \exp{( - \sum_{(\alpha,\beta)\in\Theta_{\hat\alpha,\hat\beta}} V_{\alpha,\beta}( \bX_{\Phi_{\hat\alpha,\hat\beta}} ) )},
$$
for some $\Theta_{\hat\alpha,\hat\beta}$ and $\Phi_{\hat\alpha,\hat\beta}$, which we now specify.

In fact, $\Theta_{\hat\alpha,\hat\beta}$ and $\Phi_{\hat\alpha,\hat\beta}$ are determined by the first term in \cref{eq:max_clique_mat}.
The clique $V_{\alpha,\beta}$ depends on $\bX_{\hat\alpha,\hat\beta}$ if $\alpha = \hat\alpha - \mu$ and $\beta = \hat\beta - \nu$ where $\mu,\nu\in[-r,-r+1, \dots, r]$.
Therefore, $\Theta_{\hat\alpha,\hat\beta}=\hat\alpha\pm r \times \hat\beta\pm r$.
From this follows, by checking the sum in the first term of \cref{eq:max_clique_mat}, that $\Phi_{\hat\alpha,\hat\beta}=\hat\alpha\pm 2r \times \hat\beta\pm 2r$.

\end{document}